\definecolor{ao(english)}{rgb}{0.0, 0.5, 0.0}
 \numberwithin{equation}{section}
\newtheorem{example}{Example}[section]
\newtheorem{theorem}{Theorem}[section]
\newtheorem{lemma}{Lemma}[section]
\newtheorem{proposition}{Proposition}[section]
\newtheorem{remark}[example]{Remark}
\newtheorem{definition}[example]{Definition}
\definecolor{Green}{RGB}{0,0, 0}
\newcommand{\R}{{\mathbb R}}
\newcommand{\N}{{\mathbb N}}
\newcommand{\be}{\begin{eqnarray}}
\newcommand{\ee}{\end{eqnarray}}
\renewcommand{\d}{{\rm d}}
\newcommand{\md}{{\rm d}}
\renewcommand{\O}{\Omega}
\newcommand{\A}[1]{\langle#1\rangle}
\newcommand{\rca}{{\mathcal{M}}}
\newcommand{\wto}{\rightharpoonup}
\renewcommand{\wto}{\rightharpoonup}
\newcommand{\cof}{{\rm Cof }\,}
\newcommand{\GY}{\mathcal{GY}^{\infty}(\O;\R^{n\times n})}
\newcommand{\wY}{{\stackrel{Y}{\wto}}}
\newcommand{\wstar}{{\stackrel{*}{\wto}}}
\renewcommand{\det}{{\rm det}\,}
\newcommand{\FIRST}{\color{black}}
\newcommand{\SECOND}{\color{black}}
\newcommand{\WE}{\color{black}}
\newcommand{\EOR}{\color{black}}
\newcommand{\MAY}{\color{black}}
\author{Barbora Bene\v{s}ov\'{a}\footnote{Institute of Mathematics,
University of W\"{u}rzburg, Emil-Fischer-Stra\ss e 40, 97074 W\"{u}rzburg,    Germany
 } \  \ Martin Kru\v{z}\'{\i}k\footnote{The Czech Academy of Sciences, Institute of Information Theory and Automation,
 Pod vod\'{a}renskou
v\v{e}\v{z}\'{\i}~4, CZ-182~08~Praha~8, Czech Republic (corresponding
address) \& Faculty of Civil Engineering, Czech Technical
University, Th\'{a}kurova 7, CZ-166~ 29~Praha~6, Czech Republic}\ \ Anja Schl\"{o}merkemper$^*$
}
\title{A note on locking materials and gradient polyconvexity}
\begin{document}
\maketitle
\begin{abstract}
\WE
We use gradient Young measures generated by Lipschitz maps to define a relaxation of integral functionals  which are allowed to attain the value $+\infty$ and can model ideal locking  in elasticity  as defined by Prager in 1957. 
Furthermore, we show the existence of minimizers for variational problems for elastic materials with energy densities that can be expressed in terms of a function being continuous in the deformation gradient and convex in the gradient of the cofactor (and possibly also the gradient of the determinant) of the corresponding deformation gradient. We call the related energy functional gradient polyconvex. Thus, instead of considering second derivatives of the deformation gradient as in second-grade materials, only a weaker higher integrability is imposed. Although the second-order gradient of the deformation is not included in our model, gradient polyconvex functionals allow for an implicit uniform positive lower bound on the determinant of the deformation gradient on the closure of the domain representing the elastic body.  Consequently, the material does not allow for  extreme local compression. 
\end{abstract}
\medskip
\noindent
{\bf Key Words:} Gradient polyconvexity,  Locking in elasticity, Orientation-preserving mappings, Relaxation, Young measures \\
\medskip
\noindent
{\bf AMS Subject Classification.}
49J45, 35B05
\section{Introduction}
Modern mathematical theory of nonlinear elasticity typically assumes that the first Piola-Kirchhoff stress tensor has a potential, the so-called stored energy density $W\ge 0$. Materials fulfilling this assumption are referred to as \emph{hyperelastic} materials.

The state of the hyperelastic material is described by its deformation $y:\O \to \R^n$ which is a mapping that assigns to each point in the reference configuration $\Omega$ its position after deformation. In what follows,  we assume that $\O\subset\R^n$ (usually $n=2$ or $n=3$) is a bounded Lipschitz domain. Of course, the deformation need not be the only descriptor of the state; it can  additionally be described by the temperature, inner variables etc. Nevertheless, in this article, we will not consider such more general situations. 

Stable states of specimen are then found by minimizing the energy functional
\begin{align}
I(y):=\int_\O W(\nabla y(x))\,\md x-\ell(y)
\label{energy-funct}
\end{align} 
over a class of admissible deformations $y:\overline\O\to\R^n$.  Here $\ell$ is a linear bounded functional on the set of deformations expressing the work of external loads on the specimen and $\nabla y$ is the deformation gradient which quantifies the strain. Let us note that the elastic energy density in \eqref{energy-funct} depends on the first gradient of $y$ only, which is the simplest and canonical choice. Nevertheless, $W$ might depend also on higher gradients of $y$ for so-called non-simple materials. Also, various other energy  contributions representing the work of external forces can  be included; we will include some of them for the mathematical study later.

The principle of frame-indifference requires that $W$ satisfies for all $F\in\R^{n\times n}$ and all proper rotations $R\in{\rm SO}(n)$ that 
\begin{align}\label{frameindiff}
W(F)=W(RF)\  .
\end{align}


From the applied analysis point of view, an important question is for which stored energy densities the functional $I$ in \eqref{energy-funct} possesses minimizers. Relying on the direct method of the calculus of variations, the usual approach to address this question is to study \emph{(weak) lower semi-continuity} of the functional $I$ on appropriate Banach spaces containing the admissible deformations. See e.g.~\cite{dacorogna} or the recent review \cite{benesova-kruzik-wlsc} for a detailed exposition of weak lower semicontinuity.  

Functionals that are not weakly lower semicontinuous might still possess minimizers in some specific situations, but, in general, existence of minimizers can fail. From the point of view of  materials science, such a setting can correspond to the formation of microstructure of strain-states; as it is found in, for example, shape-memory alloys \cite{ball-james, bhattacharya, mueller}.  A generally accepted modeling approach for such materials is to calculate the (weakly) lower semicontinuous envelope of $I$, the so-called \emph{relaxation}, see, e.g., \cite{dacorogna}. 
Thus, next to the characterization of weak lower semicontinuity also the calculation of the lower semicontinuous envelope is of interest in the calculus of variations.

A characterization of weak lower semicontinuity of $I$ is standardly available  if $W$ is of $p$-growth; that is,
for some $c>1$, $p \in (1,+\infty)$ and all $F\in \R^{n \times n}$ the inequality
\begin{equation}
\frac1c(|F|^p-1) \leq W(F) \leq c(1+|F|^p)
\label{pGrowth}
\end{equation}
is satisfied, which in particular implies that $W<+\infty$. Indeed, in this case, the natural class for admissible deformations is the Sobolev space $W^{1,p}(\Omega; \R^n)$ and it is well known that the relevant condition is the quasiconvexity of $W$ (see Section \ref{section-prelim} formula \eqref{quasiconvexity} for a definition) which is then equivalent to weak lower semicontinuity of $I$ on $W^{1,p}(\Omega; \R^n)$. If $W$ is not quasiconvex, then the relaxation can be computed by replacing $W$ by its quasiconvex envelope, i.e., the supremum of all quasiconvex functions lying below $W$, see, e.g., \cite{dacorogna}.

Quasiconvexity turns out to be an equivalent condition also  for weak*-lower semicontinuity on $W^{1,\infty}(\Omega; \R^n)$. If one wants to consider admissible deformations in the class of Lipschitz functions then this can be guaranteed by the following coercivity of the stored energy function:
\begin{equation}
W(F)\begin{cases}
<+\infty \quad\text{ if $|F|\le\varrho$}\\
= +\infty \quad \text{ if $|F|>\varrho$} ,
\end{cases}
\label{locking-energy}
\end{equation}
for some $\varrho>0$. This corresponds to a material model for which  the region of elasticity is given by a closed ball $\overline{B(0,\varrho)}:=\{F\in\R^{n\times n}:\ |F|\le\varrho\}$. For larger strains, the elasticity regime is left and a more elaborate model, corresponding to e.g.\ plasticity, damage etc.\ has to be employed. 

A similar concept, motivated by \emph{material locking}, was introduced by Prager in \cite{prager}, see also \cite{ciarlet-necas,demengel-suquet,golay-seppecher,panagiotopoulos,phillips,schuricht} for newer results. According to Prager's classification of elastic materials, a material is called elastically {\it hard} if its elastic constants increase with the increasing strain.  Perfectly (or ideally) locking materials extrapolate this property by assuming that  the material gets locked  (i.e.\ becomes stiff or rigid), once  some strain measure  reaches a prescribed value. (An analogous recent  concept is called  ``strain-limiting materials''; cf.~\cite{rajagopal}, where the elastic strain is bounded independently of the applied stress.)  Prager \cite{prager} introduced a locking constraint in the form $L(\nabla y)\le 0$ almost everywhere in $\O$ with
$$L(F):=\left|\frac12(F+F^\top)+(1-\frac23({\rm tr}\, F)){\rm Id}\right|^2 -\varrho ,$$ where ${\rm Id}$ is the identity matrix, ``tr'' denotes the trace,  and  $\varrho>0$ is a material parameter.  This function is, however, not suitable for nonlinear elasticity because it is not frame-indifferent, i.e., \eqref{frameindiff} is not satisfied with $L$ instead of $W$.
 Ciarlet and Ne\v{c}as \cite{ciarlet-necas} removed this issue by setting 
\begin{align}\label{fi-locking} L(F):=\frac14\left|F^\top F-\frac{|F|^2}3{\rm Id}\right|^2-\varrho .\end{align} 
Nevertheless, $L$ in \eqref{fi-locking} is  not convex.  As convexity is needed for the relaxation result in Section~\ref{sec-relaxation} we will  work with the following locking constraint, which is convex and frame-indifferent. 
\begin{align}\label{locking}
L(F):=|F|-\varrho.
\end{align} 
Notice that \eqref{locking-energy} can be replaced by  assuming that $W$ is finite only if $L(F)\le 0$ where $L$ corresponds to \eqref{locking}. 
A suitable choice of $|\cdot|$ allows us to restrict the deformation in the desired components of the particular strain measure (e.g.,  the Cauchy-Green strain tensor $(\nabla y)^\top\nabla y$, or the deformation gradient $\nabla y$)  by  requiring that $L(\nabla y)\le 0$ a.e.~in $\O$. Note that the pointwise character of locking constraints allows us to control locally  the strain appearing in the material. 

We emphasize that using a model requiring \eqref{locking-energy} (or \eqref{locking}) does not mean that deformations of the material with $|F|>\varrho$ are not possible in general. It just means that such deformation cannot be purely elastic, but must have inelastic parts, as well. In other words, one should use  physically richer models
to describe a modeled experiment. Altogether, locking constraints can serve as criteria whether or not we are authorized to use merely an elastic description of the material behavior. 

The locking constraint \SECOND $L\leq 0$ with $L$ as in \EOR \eqref{locking} models the fact that once the strain gets too large, the material leaves the elastic regime under strong tension. Of course, any elastic material will also resist compression, which is usually modeled by assuming
\begin{align}
\label{det} 
W(F)\to+\infty \text{ if }\det F\to 0_+ . 
\end{align}
The property in \eqref{det} is represented in form of a ``soft'' constraint. However, it could also be replaced by a ``hard'' locking constraint, similarly to the locking theory above: Let
\begin{equation}
\label{locking-det}
L(F):=\varepsilon - \det F,
\end{equation}
for some $\varepsilon > 0$ and assume that $W$ in \eqref{energy-funct} is finite only if $L\leq 0$. Like above, the threshold $\varepsilon$ models the ``boundary'' of the elastic region beyond which a purely elastic model is not applicable. We refer to \cite{fosdick} for a treatment of this constraint in linearized elasticity.

In Section 3, we study relaxation under the constraint \SECOND $L(\nabla y) \leq 0$ with $L$ as in \eqref{locking} and elastic boundary conditions (see \eqref{functionalJ}) \EOR by means of gradient Young measures. Indeed, while the vast majority of relaxation techniques available in the literature concern only energies that take finite values, the only relaxation result under the constraint based on \eqref{locking}, to the best of the authors' knowledge, is due to Wagner \cite{wagner-rel} (see also \cite{wagner,wagner-env}), who characterized the relaxed energy by means of an infimum formula. \FIRST We also refer to \cite{carbone, zappale} for relaxation results of unbounded functionals with scalar-valued competing maps and to  \cite{champion,gloria} for homogenization problems for unbounded functionals. \EOR However, the proof via Young measures, provided here, is considerably simpler and perhaps sheds more light on the difficulties when handling locking constraints. Indeed, the biggest difficulty, that we have to cope with, is that we have to prove that for any $y \in W^{1,\infty}(\Omega;\R^n)$ there exists a sequence $\{y_k\}_{k\in\N} \subset W^{1,\infty}(\Omega;\R^n)$ weakly* converging to $y$ such that 
$$
\overline{I}(y) = \lim_{k \to +\infty} I(y_k),
$$
with $\overline{I}(y)$ the relaxation of $I$. Following the standard methods (see e.g.\ \cite{dacorogna}) it could happen that $\{y_k\}_{k\in\N}$ does not satisfy the locking constraint even if $y$ does. We resolve this issue by a careful scaling and continuity of $W$ on its domain. This idea first appeared in \cite{k-p1} and was similarly used in \cite{wagner-rel}, too.


As far as the constraint \SECOND $L(\nabla y) \leq 0$ with $L$ as in \EOR \eqref{locking-det} is concerned, the situation is even less explored. Indeed, the study of (weak) lower semicontinuity of energies with a density that is infinite for $\det F \le 0$ and satisfies \eqref{det} is mostly inaccessible with the present methods of the calculus of variations. For example, it remains open to date if \FIRST $I$  from \eqref{energy-funct} with an energy density $W$  that is additionally quasiconvex possesses minimizers \cite[Problem~1]{ball-puzzles}. \EOR Only scattered results in particular situations have been obtained \cite{benesova-kamp,benesova-kruzik,krw}, see also \cite[Section 7]{benesova-kruzik-wlsc} for a review. For a related work that involves a passage from discrete to continuous systems and dimension reduction as well as constraints on the determinant we refer to \cite{lazzaroni-palombaro-schloemerkemper}, see also \cite{ALP}.\\

In Section~\ref{lock-det}, we prove that \FIRST the energy functional $I$ from \eqref{energy-funct} with a quasiconvex  stored energy density $W$ \EOR satisfying the locking constraint \SECOND $L(\nabla y) \leq 0$ with $L$ as in \EOR \eqref{locking-det} indeed \emph{has a minimizer}. 
Nevertheless, a relaxation result remains out of reach. We refer to \cite{conti-dolzmann} for a partial relaxation result reflecting \eqref{det} but requiring that the lower semicontinuous envelope  \FIRST of $W$ \EOR is polyconvex. 
We recall that $W:\R^{n\times n}\to\R\cup\{+\infty\}$ is polyconvex \cite{ball77} if we can write for all $A\in\R^{n\times n}$ that 
$W(A)=h(T(A))$ where $T(A)$ is the vector of all minors (subdeterminants) of $A$ and $h$ is a convex \WE and lower semicontinuous function.\EOR\\

Deformations that satisfy locking constraints naturally appear in the study of so-called non-simple materials. For such materials, the energy depends not only on the first gradient of the deformation but also on higher gradients; in particular, the second one. Such models were introduced by Toupin \cite{Toupin:62,Toupin:64} and further developed by many researchers, see e.g.~\cite{BCO,vidoli,forest,mielke-roubicek,podio} for physical background and mathematical treatment in versatile context including elastoplasticity and damage. The contribution of the higher gradient is usually associated to interfacial energies, as in  e.g.~\cite{ball-crooks,ball-mora,mielke-roubicek,podio,Silh88PTNB} which work with an energy functional of the type
\begin{equation}
J(y)= \int_\Omega (w(\nabla y(x)) + \gamma |\nabla^2 y(x)|^d) \mathrm{d} x,
\label{second-order}
\end{equation}
for some \FIRST $\gamma > 0$ \EOR and $d>1$. Now, if $d>n$, any deformation of finite energy will \SECOND satisfy $L(F) \leq 0$ with $L$ from \eqref{locking} and $\varrho$ \EOR depending only on the energy bound by Sobolev embedding.
Actually, Healey and Kr\"omer \cite{healey-kroemer} showed that in this situation,  if $w$ is suitably coercive in the inverse of the Jacobian of the deformation, the locking constraint based on \eqref{locking-det} is satisfied, too. This allows to show that minimizers of the elastic energy satisfy a weak form of the corresponding Euler-Lagrange equations. To prove the lower bound on the determinant, they exploit  that $\det\nabla y$ is H\"{o}lder continuous in $\overline{\O}$. 

Nevertheless, the form of the contribution containing the second gradient in \eqref{second-order} seems to be motivated mostly by its mathematical simplicity.

In Section~\ref{gradient-polyconvexity}, we show that, at least as far as existence of solutions \SECOND as well as the above mentioned locking constraints  are \EOR concerned, the contribution of the whole second gradient is not needed. Indeed, we introduce the notion of {\it gradient polyconvexity} where we consider  \MAY energy functionals \EOR\FIRST with an energy density \EOR that can be expressed in terms of a function which, if $n=3$, is convex in the gradient of the cofactor matrix of the deformation gradient as well as in the gradient of the determinant of the deformation gradient. In general dimensions we may consider energies that can be expressed in terms of a function which is convex in the gradient of the minors of the order $n-1$. This new type of functionals involving higher derivatives allows for the following interpretation in three dimensions: Since the determinant is a measure of the transformation of volumes and the cofactor measures the transformation of surfaces in a material, cf.\ e.g.\ \cite[Theorem~1.7--1]{ciarlet}, the maximal possible change thereof is controlled by letting the energy depend on the gradients of these measures. 

\WE We prove existence of minimizers  for such materials relying on the weak continuity of minors, similarly as in classical polyconvexity due to J.M. Ball \cite{ball77}.  First, we prove existence of minimizers for gradient polyconvex functionals in the special case that the energy density depends on the deformation gradient and the gradient of the cofactor of the deformation gradient (but not on the gradient of the determinant of the deformation gradient), cf.~Proposition~\ref{prop-grad-poly}; as we show in Proposition~\ref{prop-grad-poly3} the energy density may depend also on the spatial variable, the deformation and the inverse of the deformation gradient. Secondly, we consider gradient polyconvex functionals whose energy density depends in a convex way on the gradient of the cofactor matrix as well as on the gradient of the determinant of the deformation gradient, cf.\ Proposition~\ref{prop-grad-poly2}. \EOR

\SECOND We point out that the setting of gradient polyconvex energies allows to incorporate quite general locking constraints of the type $L(\nabla y) \leq 0$ a.e.\ for some lower semicontinuous $L$, see Propositions~\ref{prop-grad-poly}, \ref{prop-grad-poly3} and \ref{prop-grad-poly2}. Finally,  by following the lines of a locking result by Healey and Kr\"omer \cite{healey-kroemer}, we show that any minimizer satisfies that $\det(\nabla y) \geq \varepsilon$ a.e.\ for some $\varepsilon > 0$, see Propositions~\ref{prop-grad-poly} and \ref{prop-grad-poly2}. Hence the related elastic systems are prevented from full compression. While we have to assume that the Sobolev index for the cofactor matrix of the deformation gradient is larger than 3 in the first setting (Proposition~\ref{prop-grad-poly}), it turns out that this does not have to be assumed in the second setting (Proposition~\ref{prop-grad-poly2}), where, however, we have to assume that the Sobolev index for the determinant of the deformation gradient is larger than 3.\EOR 

\bigskip

\noindent To summarize our results, within this article we 
\begin{itemize}
\item prove a relaxation result under the locking constraint based on \eqref{locking} for large deformation gradients using the Young measure representation, see Section~\ref{sec-relaxation};
\item prove existence of minimizers for energies satisfying the constraint based on \eqref{locking-det}, which prevents the material from extreme local compression, see Section~\ref{lock-det};
\item introduce the notion of gradient polyconvex energies and provide a related result on existence of minimizers; further, we observe that  admissible deformations need to fulfill \eqref{locking-det}, see Section~\ref{gradient-polyconvexity}. Moreover,  every locking constraint $L(\nabla y)\le 0$ is admitted provided $L:\R^{3\times 3}\to\R$ is lower semicontinuous. 
\end{itemize}

\section{Preliminaries}
\label{section-prelim}

The relaxation results in this contribution are proved by employing so-called \emph{gradient Young measures}. Thus, we  recall some known results together with the necessary notation and refer to \cite{pedregal, r} for an introduction. We shall be working with functions in Lebesgue or Sobolev spaces over a bounded Lipschitz domain $\Omega$ with values in $\R^m$ denoted, as is standard, by $L^p(\Omega;\R^m)$ and $W^{1,p}(\Omega;\R^m)$, $1\le p\le +\infty$, respectively. Continuous  functions over a set $O$ and values in $\R^m$ are denoted by $C(O; \R^m)$.  If $m=1$ we omit the range in the function spaces. Finally, $\mathcal{M}(\R^{n \times n})$ denotes the set of all Radon measures on $\R^{n \times n}$. Let us remind that, by the Riesz theorem,
$\mathcal{M}(\R^{n\times n})$, normed by the total variation, is a Banach space which is
isometrically isomorphic with $C_0(\R^{n\times n})^*$, the dual of $C_0(\R^{n\times n})$. Here $C_0(\R^{n\times n})$ stands for
the space of all continuous functions $\R^{n\times n}\to\R$ vanishing at infinity. Further, 
$\mathcal{L}^n$ and $\mathcal{H}^n$ denote the $n$-dimensional Lebesgue and  Hausdorff measure, respectively. As to the matrix norm on $\R^{n\times n}$ we consider the Frobenius one, $|F|^2:=\sum_{i,j=1}^nF_{ij}^2$ Analogously, \MAY the Frobenius norm for $F\in\R^{n\times n\times n}$ is defined \EOR as $|F|^2:=\sum_{i,j,k=1}^nF_{ijk}^2$.  \WE \MAY The functional \EOR  $G:\R^N\to\R\cup\{+\infty\}$ is continuous (lower semicontinuous) if $X_k\to X$ in $\R^N$ for $k\to+\infty$ implies that $\lim_{k\to+\infty}G(X_k)= G(X)$ ($\liminf_{k\to+\infty}G(X_k)\ge G(X)$).
Finally, let us recall that ``$\wto$'' denotes the weak convergence in various Banach spaces. 
\EOR 

\hspace{2ex}

\noindent {\bf Young measures.} Young measures characterize the asymptotic behavior of non-linear functionals along sequences of rapidly oscillating functions. It is well known that fast oscillation in a sequence of functions $\{Y_k\}_{k\in\N} \subset L^\infty(\Omega;\R^{n \times n})$ can cause failure of strong convergence of this sequence but, relying on the Banach-Alaouglu theorem, \FIRST it is  still \EOR possible to assure that a (non-relabelled) subsequence  $\{Y_k\}_{k\in\N}$ converges weakly$^*$ to $Y \in L^\infty(\Omega;\R^{n \times n})$. In such a case, the sequence $\{f(Y_k)\}_{k\in \N}$ for a continuous function $f:\R^{n \times n}\to \R$ is bounded in $L^\infty(\Omega)$ so that, at least for a non-relabeled subsequence, it converges weakly$^*$ in $L^\infty(\Omega)$. It is clear, however, that the knowledge of the weak$^*$ limit $Y$ is not sufficient to characterize $\mathrm{w^*{-}lim}_{k\to+\infty} f(Y_k)$. A reason for this is that the weak limit simply does not retain enough information about the oscillating sequence; in particular, the weak limit can be understood as some ``mean value'' of the oscillations but it does not record any further properties apart from this ``average''. Yet, without further knowledge, a limit passage under non-linearities is not possible in general.

Young measures provide a tool how to record more information on the oscillations in the weakly$^*$ converging sequence. Roughly speaking, they also store information on ``between which values'' and with ``which weight'' the functions in the sequence oscillate. In particular, the \emph{fundamental theorem on Young measures} \cite{y} states that for every sequence $\{Y_k\}_{k\in\N}$
bounded in $L^\infty(\O;\R^{n\times n})$ 
there exists a subsequence $\{Y_k\}_{k\in\N}$ (denoted by
the same indices for notational simplicity) and a family of probability measures 
$\nu=\{\nu_x\}_{x\in\O}$ such that for all $f \in C(\R^{n \times n})$
\begin{align}\label{young-measures}
\lim_{k\to+\infty}\int_\Omega \xi(x) f(Y_k) \md x = \int_\Omega \int_{\R^{n\times n}}\xi(x) f(A)\nu_x(\d A) \md x,
\end{align}
for all $\xi \in L^1(\Omega).$ \FIRST
The obtained family of probability measures $\nu=\{\nu_x\}_{x\in\O}$ is referred to as a \emph{Young measure} and $\{Y_k\}_{k \in \N}$ is its \emph{generating sequence}. Alternatively, we say that $\{Y_k\}$ generates $\nu$. Let us denote the set of all Young measures by ${\cal Y}^\infty(\O;\R^{n\times n})$. Then ${\cal Y}^\infty(\O;\R^{n\times n}) \subset L^\infty_{\rm w}(\O;\rca(\R^{n\times n}))\cong L^1(\O;C_0(\R^{n\times n}))^*$; here $L^\infty_{\rm w}(\O;\rca(\R^{n\times n}))$ is the set of essentially bounded, weakly* measurable\footnote{The adjective ``weakly* measurable'' means that,
for any $f\in C_0(\R^{n\times n})$, the mapping
$\O\to\R$ such that $x\mapsto\A{\nu_x,f}=\int_{\R^{n\times n}} f(A)\nu_x(\d A)$ is
measurable in the usual sense.} mappings $\O\to \mathcal{M}(\R^{n\times n})$ such that
$x\mapsto\nu_x$. Moreover, as Young measures take values only in probability measures, it
is known (see e.g.~\cite[Lemma~3.1.5]{r})  that ${\cal Y}^\infty(\O;\R^{n\times n})$ is a convex subset of $L^\infty_{\rm w}(\O;\mathcal{M}(\R^{n\times n}))$.
\EOR   

We have the following characterization: If a measure $\mu=\{\mu_x\}_{x\in\O}$ is supported in a compact set $S \subset \R^{n \times n}$ for almost all $x\in\O$ and $x\mapsto\mu_x$ is weakly* measurable then there exists a sequence  $\{Z_k\}_{k\in\N}\subset L^\infty(\O;\R^{n\times n})$, with $Z_k(x)\in S$ for a.e.\ $x\in \Omega$, such that \eqref{young-measures} holds with $\mu$ and $Z_k$ instead of $\nu$ and $Y_k$, respectively. \SECOND On the other hand, we have that every measure $\nu_x \in \mathcal{Y}^\infty(\Omega; \R^{n \times n})$ \EOR (generated by the sequence $\{Y_k\}_{k\in\N}$) is supported on the set $\bigcap_{l=1}^{+\infty}\overline{\{Y_k(x);\ k\ge l\}}$ for almost all $x \in \Omega$; cf.~\SECOND \cite[Theorem~I]{balder}, \cite[Proposition 5]{valadier} \EOR.
We define the support of $\nu$ as 
$$\text{supp}\,\nu:=\bigcup_{\text{a.e.}\,x\in\O} \text{supp}\,\nu_x\ $$
and, for almost all $x\in\O$,  the first moment of the Young measure $\nu$ as
$$\bar\nu(x):=\int_{\R^{n\times n}} A\nu_x(\md A).$$

\bigskip

\noindent
{\bf Gradient Young measures.} An important question, namely which Young measures are  generated by sequences of gradients of Sobolev functions (called \emph{gradient Young measures}),  was answered by Kinderlehrer and Pedregal in \cite{k-p1,k-p}.
We recall their result for Young measures generated by gradients for sequences bounded in   $W^{1,\infty}(\O;\R^n)$. The set of such measures is denoted by $\GY$.  

\begin{theorem}[due to \cite{k-p1}]\label{qc-measures-characterization}
A Young measure $\nu=\{\nu_x\}_{x\in\O} \in \mathcal{Y}(\Omega; \R^{n \times n})$ is a gradient Young measure, i.e., it  belongs to  $\GY$ if and only if the following three conditions are satisfied simultaneously: \\
\noindent
(i) there is a compact set $\mathcal{S}\in \R^{n\times n}$ such that $\text{supp}\,\nu\subset \mathcal{S}$ ,\\

\noindent
(ii) there is $y\in W^{1,\infty}(\O;\R^n)$ such that for almost all  $x\in\O$
\begin{align}\label{fm}
\nabla y(x)= \SECOND \bar{\nu}_x \EOR = \int_{\R^{n\times n}} A\nu_x(\md A) ,
\end{align}
\noindent
(iii)  there is $\mathcal{N}\subset\O$ of  zero Lebesgue measure such that for $y$ from \eqref{fm} and all quasiconvex functions $f:\R^{n\times n}\to\R$, it holds that for   all  $x\in\O\setminus\mathcal{N}$
\begin{align}\label{jensen-qc}
f(\nabla y(x))\le \int_{\R^{n\times n}} f(A)\nu_x(\md A) .
\end{align}
\end{theorem}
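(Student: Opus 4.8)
The plan is to prove the two implications separately. The ``only if'' (necessity) direction is essentially a consequence of the definition of quasiconvexity together with weak$^*$ lower semicontinuity, whereas the ``if'' (sufficiency) direction is the genuinely hard part; I would handle it by reducing first to homogeneous Young measures and then patching the homogeneous building blocks together.

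For necessity, suppose $\nu$ is generated by $\{\nabla y_k\}_{k\in\N}$ with $y_k\wtostar y$ in $W^{1,\infty}(\O;\R^n)$. Boundedness of the sequence in $L^\infty$ confines the generating gradients to a fixed ball, and since $\nu_x$ is supported in the set of accumulation points of $\{\nabla y_k(x)\}$, condition (i) follows with $\mathcal{S}$ that ball. Testing \eqref{young-measures} with $f(A)=A$ componentwise identifies the first moment $\bar\nu_x$ with the weak$^*$ limit of $\nabla y_k$, namely $\nabla y(x)$, giving (ii). For (iii) I would fix a quasiconvex $f$; by the fundamental theorem the functions $f(\nabla y_k)$ converge weakly$^*$ to $x\mapsto\langle\nu_x,f\rangle$, and weak$^*$ lower semicontinuity of $u\mapsto\int_D f(\nabla u)\,\d x$ on every subdomain $D\subset\O$ yields $\int_D f(\nabla y)\,\d x\le\int_D\langle\nu_x,f\rangle\,\d x$. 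As $D$ is arbitrary this forces $f(\nabla y(x))\le\langle\nu_x,f\rangle$ for a.e.\ $x$, and a standard separability argument over a countable family of quasiconvex functions rich enough to test \eqref{jensen-qc} produces a single null set $\mathcal{N}$ valid for all quasiconvex $f$.

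The core of sufficiency is the homogeneous case: if $\mu$ is a single probability measure, compactly supported in some $\mathcal{S}$ with barycenter $\bar\mu=F$, and $f(F)\le\langle\mu,f\rangle$ for every quasiconvex $f$, then $\mu$ is generated by gradients of affine-plus-periodic maps $x\mapsto Fx+\phi_k(x)$ with $\phi_k\in W^{1,\infty}_0$. I would prove this by a Hahn--Banach separation argument. Let $\mathcal{K}$ denote the set of homogeneous gradient Young measures supported in $\mathcal{S}$ with barycenter $F$; one first checks that $\mathcal{K}$ is convex (by spatially mixing two generating sequences on a fine grid) and weakly$^*$ closed (stability of the generation property under weak$^*$ limits), so that if $\mu\notin\mathcal{K}$ there is $g\in C(\mathcal{S})$ with $\langle\mu,g\rangle<\inf_{\sigma\in\mathcal{K}}\langle\sigma,g\rangle$. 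The key identity to establish is that this infimum equals the value $Qg(F)$ of the quasiconvex envelope of $g$ at $F$, which is exactly the relaxation formula $Qg(F)=\inf\{\frac{1}{\mathcal{L}^n(D)}\int_D g(F+\nabla\phi)\,\d x:\ \phi\in W^{1,\infty}_0(D;\R^n)\}$. Granting it, quasiconvexity of $Qg$ together with $Qg\le g$ yields $Qg(F)\le\langle\mu,Qg\rangle\le\langle\mu,g\rangle$, contradicting the strict separation; hence $\mu\in\mathcal{K}$.

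Finally I would pass from the homogeneous statement to a general $\nu$ satisfying (i)--(iii). At almost every Lebesgue point $x_0$ the measure $\nu_{x_0}$ satisfies the pointwise Jensen inequality with barycenter $\nabla y(x_0)$, hence is a homogeneous gradient Young measure by the previous step. To assemble one generating sequence for the whole family I would approximate $y$ by piecewise affine maps, partition $\O$ up to a null set into small cubes on which $\nu$ is nearly constant and its barycenter nearly equal to the affine gradient, generate on each cube by the homogeneous result, rescale these sequences to the cubes, and glue them with cut-off functions that absorb the mismatch at cube boundaries, extracting the desired sequence by a diagonal argument. I expect the main obstacle to be the homogeneous case, and within it the identification of $\inf_{\sigma\in\mathcal{K}}\langle\sigma,g\rangle$ with $Qg(F)$ together with the weak$^*$ closedness and convexity of $\mathcal{K}$; the gluing in the non-homogeneous step is technically delicate, since the boundary corrections must neither perturb the prescribed barycenter $\nabla y$ nor enlarge the support beyond a fixed compact set, but it is conceptually routine once the homogeneous building block and the convexity of the class of gradient Young measures are in hand.
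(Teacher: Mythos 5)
The paper itself offers no proof of Theorem~\ref{qc-measures-characterization}: it is quoted from Kinderlehrer and Pedregal \cite{k-p1}, so the only meaningful comparison is with their original argument, whose architecture your sketch reproduces: necessity from Morrey's lower semicontinuity theorem localized over subdomains plus a separability argument for the exceptional null set, and sufficiency via a Hahn--Banach separation in the homogeneous case followed by piecewise-affine approximation and gluing. The necessity half is sound, and the overall plan of the sufficiency half is the right one.

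There is, however, a genuine gap at the step you yourself identify as the crux: the identity $\inf_{\sigma\in\mathcal{K}}\langle\sigma,g\rangle=Qg(F)$ is false as you have set it up. With $\mathcal{K}$ consisting of homogeneous gradient Young measures \emph{supported in $\mathcal{S}$} with barycenter $F$, the inequality $\inf_{\mathcal{K}}\langle\cdot,g\rangle\le Qg(F)$ — which is the direction you need to close the contradiction $Qg(F)\le\langle\mu,Qg\rangle\le\langle\mu,g\rangle<\inf_{\mathcal{K}}\langle\cdot,g\rangle\le Qg(F)$ — would require near-optimal test functions $\phi$ in the relaxation formula to satisfy $F+\nabla\phi\in\mathcal{S}$, which is not true in general. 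For instance, if $\mathcal{S}=\{F\}$ then $\mathcal{K}=\{\delta_F\}$ and the left-hand side equals $g(F)$, which is strictly larger than $Qg(F)$ whenever $g$ fails to be quasiconvex at $F$; so the lemma you are granting yourself cannot be the right one. The repair, which is where the real work in \cite{k-p1} lies, is to define the separation class without the support constraint — homogeneous gradient Young measures with barycenter $F$ supported in a large fixed ball $\overline{B(0,R)}\supset\mathcal{S}$, which is still convex and weak$^*$ closed in $C(\overline{B(0,R)})^*$ — and then to relate the resulting \emph{constrained} quasiconvexification (gradients confined to the ball) to genuinely quasiconvex functions on all of $\R^{n\times n}$, since hypothesis (iii) only provides the Jensen inequality for the latter. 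That comparison rests on Zhang's truncation lemma (whose sharpening by M\"uller is quoted in the paper as Proposition~\ref{mueller-proposition}) and is the technical heart of the theorem. A smaller omission of the same flavor occurs in your gluing step: $x\mapsto\nu_x$ is only weakly$^*$ measurable, so ``cubes on which $\nu$ is nearly constant'' must be produced by a Lusin/Scorza--Dragoni type approximation of the Young measure rather than by continuity in $x$.
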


We recall that $f:\R^{n\times n}\to\R$ is said to be quasiconvex (in the sense of Morrey \cite{morrey}) if 
\begin{align}\label{quasiconvexity}f(A)\mathcal{L}^n(\O)\le\int_\O f(\nabla\varphi(x))\,\md x\end{align} 
for all $A\in\R^{n\times n}$ and all $\varphi\in W^{1,\infty}(\O;\R^n)$ such that $\varphi(x) = Ax$ on $\partial \Omega$. Condition (ii) in Theorem \ref{qc-measures-characterization} says that the first moment $\bar\nu$ of the Young measure $\nu$ is $\nabla y$, while (iii) is a Jensen-type inequality for quasiconvex functions. 

We introduce the so-called {\em $Y$-convergence}; i.e., the convergence of a generating sequence toward a Young measure. 

\begin{definition}\label{convergence}
Assume that $\{y_k\}_{k\in\N}\subset W^{1,\infty}(\O;\R^n)$. Then
$y_k\stackrel{Y}{\wto} (y,\nu)\in C(\overline\O;\R^n)\times\GY$ as $k\to+\infty$ if 
$y_k\to y$ in $C(\overline\O;\R^n)$ and $\{\nabla y_k\}$ generates $\nu$. This convergence is called
the $Y$-convergence.
\end{definition}

The following statement was proved by M\"{u}ller in \cite{mueller1} and it is a generalization of a former result due to Zhang \cite{zhang}. \FIRST It says that a Young measure supported on a convex bounded set can be generated by a sequence of gradients of Sobolev functions taking values in an arbitrarily small neighborhood of this set. \EOR  Before giving the precise statement of this result, we recall that dist$(A,\mathcal{S}):=\inf_{F\in \mathcal{S}}|A-F|$ for $A\in\R^{n\times n}$, $\mathcal{S}\subset \R^{n\times n}$.

\begin{proposition}\label{mueller-proposition}
Let $\nu\in\GY$ and let $y$ a $W^{1,\infty}(\Omega;\R^n)$-function defined through its mean value, i.e., found via \eqref{fm}. Finally, assume that $\mathcal{S}$ in Theorem~\ref{qc-measures-characterization} is convex. Then there is 
$\{y_k\}_{k\in\N}\subset W^{1,\infty}(\O;\R^n)$ such that $y_k\stackrel{Y}{\wto} (y,\nu)\in C(\overline\O;\R^n)\times\GY$ 
and $\mathrm{dist}(\nabla y_k,\mathcal{S})\to 0$ in $L^\infty(\O;\R^{n\times n})$ as $k\to+\infty$.
\end{proposition}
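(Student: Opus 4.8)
The plan is to prove Proposition~\ref{mueller-proposition} by combining the characterization of gradient Young measures (Theorem~\ref{qc-measures-characterization}) with the cited result of M\"{u}ller \cite{mueller1}, whose whole point is precisely to upgrade a generating sequence to one taking values in an arbitrarily thin neighborhood of a convex set. Since $\nu\in\GY$, Theorem~\ref{qc-measures-characterization} furnishes a compact support set $\mathcal{S}$, a function $y\in W^{1,\infty}(\O;\R^n)$ satisfying the first-moment condition \eqref{fm}, and the Jensen-type inequality \eqref{jensen-qc}. By hypothesis $\mathcal{S}$ is convex. My first step is therefore to record that $\nu$ is, by definition of $\GY$, generated by \emph{some} sequence $\{z_k\}_{k\in\N}\subset W^{1,\infty}(\O;\R^n)$ of gradients, and that $\text{supp}\,\nu\subset\mathcal{S}$ with $\mathcal{S}$ compact and convex. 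This places us exactly in the setting of M\"{u}ller's theorem.

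The second step is to invoke \cite{mueller1} (a refinement of Zhang \cite{zhang}): given a gradient Young measure supported in a \emph{compact convex} set $\mathcal{S}$, there is a generating sequence $\{\nabla y_k\}$ whose values are squeezed toward $\mathcal{S}$, i.e.\ $\mathrm{dist}(\nabla y_k,\mathcal{S})\to 0$ in $L^\infty(\O;\R^{n\times n})$. The convexity of $\mathcal{S}$ is what makes this possible: one can truncate or project the approximating gradients onto a slightly enlarged convex set without destroying either the gradient structure (via a suitable cut-off/lamination construction) or the generated measure, because projection onto a convex set is $1$-Lipschitz and the Jensen inequality \eqref{jensen-qc} guarantees that the first moment $\nabla y(x)=\bar\nu_x$ itself lies in $\mathcal{S}$ for a.e.\ $x$. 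Concretely, I would extract from \cite{mueller1} that the new sequence still generates the same $\nu$; the $C(\overline\O;\R^n)$-convergence $y_k\to y$ is then arranged by adding the affine/boundary correction so that the first moments match $\nabla y$, using that uniform boundedness of $\{\nabla y_k\}$ plus fixed boundary data gives, via the Arzel\`a--Ascoli theorem, strong convergence of the primitives in $C(\overline\O;\R^n)$ to the function $y$ determined by \eqref{fm}.

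The third step is to assemble these facts into the conclusion $y_k\stackrel{Y}{\wto}(y,\nu)$ in the sense of Definition~\ref{convergence}: the requirement $\{\nabla y_k\}$ generates $\nu$ is delivered by M\"{u}ller's construction, and $y_k\to y$ in $C(\overline\O;\R^n)$ follows from the equiboundedness and equicontinuity of the (uniformly Lipschitz) primitives together with matching of the mean value \eqref{fm}. The remaining property $\mathrm{dist}(\nabla y_k,\mathcal{S})\to 0$ in $L^\infty$ is exactly the quantitative output of \cite{mueller1}.

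The main obstacle is the simultaneous control of three competing requirements in M\"{u}ller's construction: keeping the generated measure equal to $\nu$, forcing the gradients into the shrinking neighborhood of $\mathcal{S}$, and preserving the correct first moment (hence the correct limit $y$) together with uniform Lipschitz bounds. The convexity assumption on $\mathcal{S}$ is the decisive ingredient that resolves this tension, since it allows a nonexpansive projection that pushes stray gradient values back toward $\mathcal{S}$ while leaving averages essentially unchanged; the technical heart is verifying that this projection step does not alter the limiting Young measure, which is where I would lean most heavily on the cited result rather than reproving it. Since \cite{mueller1} is stated in essentially this generality, the honest proof is short: it is a direct citation of M\"{u}ller's theorem combined with a bookkeeping argument to match \eqref{fm} and to pass to $C(\overline\O;\R^n)$-convergence of the primitives.
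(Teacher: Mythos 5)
Your proposal is correct and matches the paper exactly: the paper gives no proof of Proposition~\ref{mueller-proposition} at all, but simply quotes it as a result of M\"uller \cite{mueller1} (generalizing Zhang \cite{zhang}), which is precisely the citation-based argument you propose. Your additional sketch of why convexity of $\mathcal{S}$ makes the truncation work is reasonable colour but is not required, since the heavy lifting is entirely delegated to the cited reference in both your write-up and the paper.
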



\bigskip

\bigskip

\section{Relaxation under locking constraints \SECOND based on \eqref{locking}\EOR}
\label{sec-relaxation}
In this section, we prove a relaxation result for functionals involving a stored energy density  $W\in C(\overline{B(0,\varrho)})$ and a locking constraint $L$ from \eqref{locking}. 

In more detail, we seek to solve
\begin{align}\label{functionalJ} 
&\text{minimize} && J(y):=\int_\O W(\nabla y(x))\,\md x-\ell(y) +\alpha\|y-y_0\|_{L^2(\Gamma;\R^n)} ,\nonumber\\
&\text{subject to} &&y\in \SECOND \mathcal{A}_\varrho ,\ \ \end{align}
with
\SECOND
\begin{align} \label{**}
\mathcal{A}_\varrho:=\{y\in W^{1,\infty}(\O;\R^n);\ \|\nabla y\|_{L^\infty(\O;\R^{n\times n})}\le \varrho\}.
\end{align}
\EOR

Here, $\varrho > 0$ is the constant introduced in \eqref{locking}, $\Gamma\subset\partial\O$ has positive  $(n-1)$-dimensional Hausdorff measure, and $y_0\in W^{1,\infty}(\O;\R^n)$ is a given function. Also recall that $\ell: W^{1,\infty}(\O;\R^n) \to \R$ is a linear bounded functional accounting for surface or volume forces. 
The last term of $J$ imitates
Dirichlet boundary conditions $y=y_0$ on $\Gamma$ realized by means of an elastic  hard device.  The constant $\alpha>0$ refers to the elastic properties of this device. From the mathematical point of view, this term, together with the locking constraint, will yield boundedness of the minimizing sequence of $J$ in $W^{1,\infty}(\O;\R^n)$ due to the generalized Poincar\'{e} inequality \cite[Theorem 1.32]{rou}. 
Notice also that the last term in $J$ is continuous with respect to the weak* convergence in  $W^{1,\infty}(\O;\R^n)$.

It is a classical result that if $W$ is quasiconvex then $J(y)$ is weakly* lower semicontinuous on $W^{1,\infty}(\O;\R^n)$ (see e.g.\ \cite{dacorogna}). Nevertheless, due to the involved locking constraint and the fact that $W$ may not even be defined outside $\overline{B(0,\varrho)}$, this does not directly mean that \eqref{functionalJ} possesses a solution. This issue was settled by Kinderlehrer and Pedregal \cite{k-p} who, by suitable rescaling, indeed showed that \eqref{functionalJ} is solvable provided $W$ is quasiconvex on its domain.

On the other hand, if $W$ is not quasiconvex, solutions to \eqref{functionalJ} might not exist due to faster and faster spatial  oscillations of the sequence of gradients. In this case, some physically relevant quantities \FIRST such as microstructure patterns \EOR  can be drawn from studying minimizers of the relaxed problem. In the following we provide a relaxation of the functional $J$ by means of Young measures. 

Thus, we define $\bar J: W^{1,\infty}(\O;\R^n)\times\GY\to\R$ and the following relaxed minimization problem via:
\begin{align}\label{functionalJbar} 
&\text{minimize} && \bar J(y_\nu,\nu):= \int_\O\int_{\R^{n\times n}} W(A)\nu_x(\md A)\,\md x-\ell(\SECOND y_\nu \EOR) + \alpha\|y_\nu-y_0\|_{L^2(\Gamma;\R^n)} ,\nonumber\\
&\text{subject to}&&  y_\nu\in W^{1,\infty}(\O;\R^n) ,\ \nu\in \GY,\  \text{supp}\,\nu\subset \overline{B(0,\varrho)},\
\nabla y_\nu=\bar\nu .\end{align}
\SECOND
Here we recall that $\bar{\nu}_x = \int_{\R{n \times n}} A \d \nu_x(A)$. \EOR

The next result shows that \eqref{functionalJbar} is indeed a relaxation of \eqref{functionalJ} in the sense specified in the proposition.

\begin{proposition}\label{prop:one}
\SECOND Let $\Omega$ be a bounded Lipschitz domain. \EOR \FIRST Let $\ell\in (W^{1,\infty}(\O;\R^n))^*$, \EOR $W\in C(\overline{B(0,\varrho)})$ for some $\varrho>0$, let $\alpha>0$, and let $y_0\in W^{1,\infty}(\Omega;\R^n)$. Then the infimum of $J$ in \eqref{functionalJ} is the same as the minimum of $\bar J$ in \eqref{functionalJbar}. Moreover, 
every minimizing sequence of \eqref{functionalJ} contains a subsequence which $Y$-converges  to a minimizer of \eqref{functionalJbar}. On the other hand, for every minimizer $(y_\nu, \nu)$ of \eqref{functionalJbar} there exists a minimizing sequence of \eqref{functionalJ} $\{y_k\}_{k \in \N}$ such that 
$$
J(y_k) \to \bar{J}(y_\nu, \nu).
$$
\end{proposition}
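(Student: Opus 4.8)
The plan is to establish the equality of the two variational problems by the standard two-inequality argument, using the fundamental theorem on Young measures together with Müller's approximation result (Proposition~\ref{mueller-proposition}) to handle the locking constraint.

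\textbf{Lower bound (liminf inequality).}
First I would take an arbitrary minimizing sequence $\{y_k\}_{k\in\N}$ of \eqref{functionalJ}, so each $y_k\in\adm_\varrho$ and hence $\|\nabla y_k\|_{L^\infty}\le\varrho$. Combined with the surface term $\alpha\|y_k-y_0\|_{L^2(\Gamma;\R^n)}$, the generalized Poincar\'e inequality cited in the text bounds $\{y_k\}$ in $W^{1,\infty}(\O;\R^n)$. Thus, up to a (non-relabelled) subsequence, $y_k\wtostar y_\nu$ in $W^{1,\infty}$, $y_k\to y_\nu$ in $C(\overline\O;\R^n)$ by Arzel\`a--Ascoli, and $\{\nabla y_k\}$ generates some gradient Young measure $\nu\in\GY$ with $\nabla y_\nu=\bar\nu$; that is, $y_k\wY(y_\nu,\nu)$. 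Since $\|\nabla y_k\|_{L^\infty}\le\varrho$, the support characterization of Young measures (the intersection-of-closures formula recalled in the preliminaries) forces $\text{supp}\,\nu\subset\overline{B(0,\varrho)}$, so $(y_\nu,\nu)$ is admissible for \eqref{functionalJbar}. Now I extend $W$ to a function $\tilde W\in C(\R^{n\times n})$ (e.g.\ by Tietze) and apply \eqref{young-measures} with $f=\tilde W$ and $\xi\equiv 1$: since $\nabla y_k$ takes values in $\overline{B(0,\varrho)}$ where $\tilde W=W$, the energy terms converge exactly, $\int_\O W(\nabla y_k)\,\md x\to\int_\O\int_{\R^{n\times n}}W(A)\nu_x(\md A)\,\md x$. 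Together with $\ell(y_k)\to\ell(y_\nu)$ and continuity of the surface term under weak* convergence, this gives $J(y_k)\to\bar J(y_\nu,\nu)$, whence $\inf J\ge\bar J(y_\nu,\nu)\ge\min\bar J$.

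\textbf{Upper bound (recovery sequence).}
This is the delicate direction and the place where the locking constraint bites. Given any admissible $(y_\nu,\nu)$ for \eqref{functionalJbar}, I want a sequence $\{y_k\}\subset\adm_\varrho$ with $J(y_k)\to\bar J(y_\nu,\nu)$. The set $\mathcal S:=\overline{B(0,\varrho)}$ is convex, so Proposition~\ref{mueller-proposition} yields $\{z_k\}\subset W^{1,\infty}(\O;\R^n)$ with $z_k\wY(y_\nu,\nu)$ and $\mathrm{dist}(\nabla z_k,\overline{B(0,\varrho)})\to 0$ in $L^\infty$. The obstacle is that $\mathrm{dist}(\nabla z_k,\overline{B(0,\varrho)})\to 0$ does not guarantee $\|\nabla z_k\|_{L^\infty}\le\varrho$, so the $z_k$ may violate the hard locking constraint, and moreover $W$ need not even be defined where $|\nabla z_k|>\varrho$.

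\textbf{Resolving the obstacle by rescaling.}
The remedy, following the idea from \cite{k-p1} referenced in the introduction, is a careful scaling. Set $\delta_k:=\|\,\mathrm{dist}(\nabla z_k,\overline{B(0,\varrho)})\,\|_{L^\infty}\to 0$, so that $\|\nabla z_k\|_{L^\infty}\le\varrho+\delta_k$, and define the rescaled maps
\begin{equation*}
y_k:=\frac{\varrho}{\varrho+\delta_k}\,z_k.
\end{equation*}
Then $\|\nabla y_k\|_{L^\infty}\le\varrho$, so $y_k\in\adm_\varrho$ is genuinely admissible. Because $\varrho/(\varrho+\delta_k)\to 1$, one checks that $y_k\to y_\nu$ in $C(\overline\O;\R^n)$ and $\{\nabla y_k\}$ still generates $\nu$ (the rescaling is a uniformly small perturbation, so it does not change the generated Young measure — this uses that $\nabla y_k-\nabla z_k\to 0$ in $L^\infty$). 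For the energy I use uniform continuity of $W$ on the compact set $\overline{B(0,\varrho)}$: writing $A_k:=\tfrac{\varrho}{\varrho+\delta_k}\pi(\nabla z_k)$, where $\pi$ is the projection onto $\overline{B(0,\varrho)}$, the values $\nabla y_k$ stay in $\overline{B(0,\varrho)}$ and differ from the relevant arguments by $o(1)$ uniformly, so $\int_\O W(\nabla y_k)\,\md x\to\int_\O\int W(A)\nu_x(\md A)\,\md x$; combined with convergence of the load and surface terms this gives $J(y_k)\to\bar J(y_\nu,\nu)$. Applying this to a minimizer $(y_\nu,\nu)$ of $\bar J$ shows $\inf J\le\bar J(y_\nu,\nu)=\min\bar J$, and also furnishes the explicit recovery sequence asserted in the last sentence of the proposition. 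Together with the lower bound the two minimization values coincide, the $Y$-limit of every minimizing sequence is a minimizer of $\bar J$, and existence of minimizers of $\bar J$ follows since minimizing sequences of $J$ always produce admissible $Y$-limits. The main technical care is verifying that the rescaling preserves the generated Young measure and that uniform continuity of $W$ transfers the energy convergence through the rescaling.
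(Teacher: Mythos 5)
Your argument is correct and follows essentially the same route as the paper: the fundamental theorem on Young measures for the lower bound, and M\"uller's approximation (Proposition~\ref{mueller-proposition}) combined with the rescaling by $\varrho/(\varrho+\delta_k)$ and uniform continuity of $W$ on $\overline{B(0,\varrho)}$ for the recovery sequence. The only (harmless) differences are presentational: you prove $\inf J\le \bar J(y_\mu,\mu)$ directly for every admissible pair and rescale each $z_k$ by its own factor $\delta_k$, whereas the paper argues minimality by contradiction and uses a two-parameter family $y^\varepsilon_k$ with a diagonal choice $\varepsilon=\varepsilon(k)$.
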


 \begin{remark}
The relaxation statement in Proposition \ref{prop:one} is different to other similar relaxation statements using Young measures (cf.\ e.g.~\cite{pedregal}). Indeed, in \cite{pedregal} and other works, the relaxation is obtained for a large class of energy densities \emph{at once}. However, here the relaxation is obtained only for \SECOND the functional \eqref{functionalJ} with one particular given energy density $W$. \EOR \FIRST More precisely, given a  pair $(y_\nu,\nu)$ which minimizes $\bar J$   we construct a minimizing sequence $\{y_k\}$ such that 
\eqref{young-measures} holds   for $Y_k:=\nabla y_k$, $\xi=1$ and $f:=W$ or real  multiples of $W$. Yet, this is completely enough to show the relaxation result in full generality.

\EOR

\SECOND
\begin{remark}
Let us remark that the construction of the recovery sequence for the minimizer in the proof of Proposition \ref{prop:one} can be taken in verbatim to construct a recovery sequence not only for the minimizer but for any  $(y_\nu, \nu) \in W^{1,\infty}(\Omega;\R^n) \times \mathcal{GY}^\infty(\Omega;\R^{n \times n})$ such that $\bar{J}(y_\nu, \nu)$ is of finite energy.
\end{remark}
\EOR

\end{remark}

\begin{proof}[Proof of Proposition~\ref{prop:one}]
We first prove that every minimizing sequence of \eqref{functionalJ} (or at least a subsequence thereof) $Y$-converges to a minimizer of \eqref{functionalJbar} and that the values of the infimum in \eqref{functionalJ} and the minimum of \eqref{functionalJbar} agree. 

To this end, take $\{y_k\}_{k\in\N}$ a minimizing sequence for \eqref{functionalJ}. This sequence has to belong to the  set \SECOND $\mathcal{A}_\varrho$ \EOR
and, by definition, $\{J(y_k)\}_{k\in\N}$ converges to $\inf_{\mathcal{A}_\varrho}J$.  Inevitably, $\nabla y_k(x)\in \overline{B(0,\varrho)}$ for almost all $x\in\O$, so that $L(\nabla y_k)\le 0$, with $L$ as in \eqref{locking}. Moreover, as $\alpha>0$ \SECOND and $\Omega$ is a bounded Lipschitz domain, the Poincar\'{e} inequality \cite[Thm.~1.32]{rou} \EOR implies that $\{y_k\}_{k \in \N}$ is uniformly  bounded in $W^{1,\infty}(\O;\R^n)$. Therefore, there is a Young measure $\nu\in\GY$ and a function $y_\nu\in W^{1,\infty}(\O;\R^n)$ such that $y_k\wY (y_\nu,\nu)$ as $k\to+\infty$ (at least in terms of a non-relabeled subsequence). Moreover, $\nu$ is supported in   
$$\bigcup_{\mbox{\footnotesize a.e.\ } x\in \Omega} \mathrm{supp}(\nu_x) \subset \bigcup_{\footnotesize\mbox{a.e.\ } x\in \Omega} \bigcap_{\ell=1}^\infty \overline{\{ \nabla y_k(x), \, k\geq \ell\}} \subset \overline{B(0,\varrho)}$$
(see e.g.\ \SECOND \cite[Theorem I]{balder}\EOR) and for the first moment of $\nu$ we have that $\bar\nu=\nabla y_\nu$. Now, by the fundamental theorem on Young measures $J(y_k) \to \bar{J}(y_\nu, \nu)$ and, consequently, it holds that  $$\inf_{\mathcal{A}_\varrho}J= \bar J(y_\nu,\nu)\ .$$ 

We now prove that $(y_\nu,\nu)$ is indeed a minimizer of $\bar{J}$ in \eqref{functionalJbar}. Suppose, by contradiction, that this was not the case. Then there had to exist another gradient Young measure $\mu\in\GY$ and a corresponding $y_\mu\in W^{1,\infty}(\O;\R^n)$ such that $\nabla y_\mu=\bar\mu$, $\mu$ is supported on $\overline{B(0,\varrho)}$ and $$\bar J(y_\mu,\mu)<\bar J(y_\nu,\nu).$$
We show that this is not possible. 
Indeed,  for every $0<\varepsilon <1$ we find a generating sequence $\{y^\varepsilon_k\}_{k\in\N}\subset W^{1,\infty}(\O;\R^n)$ for $\mu$, that is $ y^\varepsilon_k\wY (y_\mu,\mu)$ as $k\to +\infty$, such that $\sup_{k\in\N}\|\nabla y^\varepsilon_k\|_{L^\infty(\O;\R^{n\times n})}\le \varrho+\varepsilon$, see  Proposition~\ref{mueller-proposition}. By the fundamental theorem on Young measures, 
\begin{align} \label{star}
\lim_{k\to+\infty} J(y^\varepsilon_k)=\bar J(y_\mu,\mu) < \bar J(y_\nu,\nu)=\inf_{\mathcal{A}_\varrho}J.
\end{align}
At this point, we abused the notation a bit for the sake of better readability of the proof. Indeed, $J(y^\varepsilon_k)$ might not be well-defined because $\nabla y^\varepsilon_k(x)$ might not be contained in the ball $\overline{B(0,\varrho)}$ for almost all $x \in \Omega$; however, the energy density $W$ is defined, originally, just on this ball. Nevertheless, relying on the Tietze theorem, we can extend $W$ from the ball $\overline{B(0,\varrho)}$ in a continuous way to the whole space. We will denote this extension by $W$ again and the functional into which it enters again by $J$. 

By \eqref{star}, there is $k_0=k_0(\varepsilon)\in\N$ such that $ J(y^\varepsilon_{k_0})\leq \inf_{\mathcal{A}_\varrho}J-\delta$ for some $\delta>0$ and $\|y^\varepsilon_{k_0}-y_\mu\|_{L^2(\Gamma;\R^n)}\le 1$ as well as $\|y^\varepsilon_{k_0}-y_\mu\|_{L^\infty(\Omega;\R^n)}\le 1$ (due to the strong convergence of $\{y^\varepsilon_k\}$ to $y_\mu$ for $k\to +\infty$).  Now, we can apply a trick, similar to the one used in \cite[Proposition~7.1]{k-p1}, and multiply $  y^\varepsilon_{k_0}$ by $\varrho/(\varrho+\varepsilon)$ so that the values of the gradient belong to  $\overline{B(0,\varrho)}$. With this rescaling, we obtain 
\begin{align*} 
\big |J(\varrho/(\varrho+\varepsilon)y^\varepsilon_{k_0})- J(y^\varepsilon_{k_0})\big| 
&  \le\left|\int_\O W(\varrho/(\varrho+\varepsilon)\nabla  y^\varepsilon_{k_0}(x))\md x - \int_\O W(\nabla y^\varepsilon_{k_0}(x))\md x\right|\nonumber\\
&\quad +C_\ell \left\|\varrho/(\varrho+\varepsilon)y^\varepsilon_{k_0}-y_{k_0}^\varepsilon\right\|_{L^\infty(\O;\R^n)}\nonumber\\
& \quad +\left| \alpha\left\|\varrho/(\varrho+\varepsilon)y^\varepsilon_{k_0}-y_0 \right \|_{L^2(\Gamma;\R^n)}- \alpha\left \|y^\varepsilon_{k_0}-y_0\right \|_{L^2(\Gamma;\R^n)}\right|\nonumber \\
&\le \int_\O \vartheta\big(\varepsilon/(\varrho+\varepsilon)|\nabla  y^\varepsilon_{k_0}(x)|\big)\,\md x+C_\ell \|\varrho/(\varrho+\varepsilon)y^\varepsilon_{k_0}-y_{k_0}^\varepsilon\|_{L^{\infty}(\O;\R^n)}\nonumber\\
& \quad + \alpha\| \varrho/(\varrho+\varepsilon)y^\varepsilon_{k_0}-y^\varepsilon_{k_0}\|_{L^2(\Gamma;\R^n)},\nonumber
\end{align*}
where $C_\ell$ is the norm of $\ell$ and  $\vartheta:[0,+\infty)\to[0,+\infty)$ is the nondecreasing  modulus of uniform continuity of $W$ on $\overline{B(0,\varrho+1)}$, which satisfies $\lim_{s\to 0}\vartheta(s)=0$. Since $\nabla y^\varepsilon_{k_0}$ is bounded by $\varrho + \varepsilon$, we have
\begin{align}
|J(\varrho/(\varrho+\varepsilon) y^\varepsilon_{k_0})- J(y^\varepsilon_{k_0})\big| 
& \le  \mathcal{L}^n(\O)\vartheta\big(\varepsilon\big)+\frac{(C_\ell+\alpha) \varepsilon}{\varrho+\varepsilon}(\|y^\varepsilon_{k_0}\|_{L^2(\Gamma;\R^n)}+\|y^\varepsilon_{k_0}\|_{L^\infty(\O;\R^n)})\nonumber\\
&\le \mathcal{L}^n(\O)\vartheta\big(\varepsilon\big)+\frac{(C_\ell+\alpha)\varepsilon}{\varrho+\varepsilon}(\|y_\mu\|_{L^2(\Gamma;\R^n)}+\|y_\mu\|_{L^\infty(\O;\R^n)}+2)  .\label{modulus}
  \end{align}

  The right-hand side in \eqref{modulus} tends to zero as $\varepsilon\to 0$; therefore,  for $\varepsilon>0$ small enough, it is smaller than  $\delta/2$ and thus  $$ J(\varrho/(\varrho+\varepsilon) y^\varepsilon_{k_0})\leq \inf_{\mathcal{A}_\varrho}J- \delta/2.$$  
 This closes our contradiction argument because $\varrho/(\varrho+\varepsilon) y^\varepsilon_{k_0}\in\mathcal{A}_\varrho$, so that we showed that every minimizing sequence of \eqref{functionalJ} generates a minimizer of \eqref{functionalJbar}.
 
To finish the proof, we need to show that for any minimizer $(y_\nu, \nu) \in W^{1,\infty}(\Omega;\R^n) \times \mathcal{GY}^\infty(\Omega;\R^{n \times n})$ of \eqref{functionalJbar} we can construct a sequence $\{y_k\}_{k \in \N} \subset W^{1,\infty}(\Omega; \R^n)$ that is a minimizing sequence of $\eqref{functionalJ}$ and satisfies
$$
J(y_k) \to \bar{J}(y_\nu, \nu).
$$

The strategy is similar to above: indeed, for any $\varepsilon>0$ we find a generating sequence $\{y^\varepsilon_k\}_{k\in\N}\subset W^{1,\infty}(\O;\R^n)$ for $\nu$, that is $ y^\varepsilon_k\wY (y_\nu,\nu)$ as $k\to +\infty$, such that $\sup_{k\in\N}\|\nabla y^\varepsilon_k\|_{L^\infty(\O;\R^{n\times n})}\le \varrho+\varepsilon$. Let us rescale this sequence by $\frac{\varrho}{\varrho+\varepsilon}$ and consider only $k$ large enough to obtain a sequence $\{\bar{y}^\varepsilon_k\}_{k\in\N}$ that is contained in $\mathcal{A}_\varrho$ and satisfies $\|\bar{y}^\varepsilon_{k}-y_\nu\|_{L^2(\Gamma;\R^n)}\le 1$ as well as $\|\bar{y}^\varepsilon_{k}-y_\nu\|_{L^\infty(\Omega;\R^n)}\le 1$ for all $k \in \N$. 

Choosing a subsequence of $k$'s if necessary, we can assure that 
$$
|J(y_k^{\varepsilon(k)}) - \bar{J}(y_\nu, \nu)| \leq \frac{1}{k},
$$
for any arbitrary $\varepsilon$ fixed. Moreover, owing to \eqref{modulus}, we can choose $\varepsilon=\varepsilon(k)$ in such a way that 
$$
\left|J(\bar{y}_k^{\varepsilon(k)})-J(y_k^{\varepsilon(k)})\right| \leq \frac{1}{k},
$$
so that
$$
\left|J(\bar{y}_k^{\varepsilon(k)})-\bar{J}(y_\nu, \nu)\right|\leq \frac{2}{k}.
$$
Thus,  we can construct a sequence $\{\bar{y}_k\}_{k\in\N}$ by setting $\bar{y}_k = \bar{y}_k^{\varepsilon(k)}$ that lies in $\mathcal{A}_\varrho$ and satisfies
$$
J(\bar{y}_k) \to \bar{J}(y_\nu,\nu) \quad \text{as } k\to +\infty.
$$
Finally, since $\bar{J}(y_\nu,\nu) = \inf_{\mathcal{A}_\varrho}J$ the constructed sequence is indeed a minimizing sequence of $J$.
\end{proof}

\begin{remark}
Notice that the previous result includes the case when $W(F)=+\infty$ whenever $L(F)>0$, i.e., when $|F|>\varrho$. 
\end{remark}

\begin{remark}
As mentioned in the introduction, Lipschitz continuous deformations naturally appear in the theory of non-simple materials, see e.g.~\cite{healey-kroemer,mielke-roubicek,podio}, where the stored energy density depends not only on the first gradient of the deformation but also on its higher gradients. In the simplest situation, one considers the first and the second gradient of $y$.  Then it is natural to  assume that $y\in W^{2,p}(\O;\R^n)$, which for $p>n$ embeds into $W^{1,\infty}(\O;\R^n)$  and makes $\nabla y$ H\"{o}lder continuous on $\overline{\O}$. 
\end{remark}

\SECOND
We may ask whether the recovery sequence constructed in the proof of Proposition \ref{prop:one} can be taken as piecewise-affine. This would allow to construct the recovery sequence, e.g., by finite element approximations or find its application in discrete-to-continuum transitions. The answer here is affirmative. Indeed,
\EOR
we quote the following proposition which can be found in \cite[Proposition~2.9, p.~318]{ekeland-temam}.
\begin{proposition} 
Let $\O\subset\R^n$ be a bounded Lipschitz domain, $\varrho>0$,  and  $y\in \mathcal{A}_\varrho$ as in \eqref{**}.
Then there is an increasing sequence of open sets,   $\O_{i-1}\subset\O_i\subset\O$, $i\in \N$, such that  $\mathcal{L}^n(\O\setminus\O_i)\to 0$ for $i\to+\infty$ and a sequence of Lipschitz maps  $\{\tilde y_i\}_{i\in\N}\subset W^{1,\infty}(\O;\R^n)$ such that $\tilde y_i$ is piecewise affine on $\O_i$, 
\begin{align*}\|\nabla \tilde y_i\|_{L^\infty(\O;\R^{n\times n})}\le \|\nabla y\|_{L^\infty(\O;\R^{n\times n})}+c_i ,\end{align*}
where $\lim_{i\to+\infty}c_i= 0$, $\tilde y_i\to y$ uniformly in $\O$, and $\nabla \tilde y_i\to\nabla y$  almost everywhere in $\O$ as $i\to+\infty$.  Moreover, $\tilde y_i=y$ on $\partial\O$ for all $i\in\N$.
\end{proposition}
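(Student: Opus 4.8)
The plan is to combine interior mollification with piecewise-affine interpolation and a boundary gluing, where the mollification is the device that almost preserves the pointwise gradient bound. First I would set $\varrho_0:=\|\nabla y\|_{L^\infty(\O;\R^{n\times n})}\le\varrho$ and, for $\delta>0$, mollify on the interior subdomain $\O_\delta:=\{x\in\O:\ \mathrm{dist}(x,\partial\O)>\delta\}$, putting $y_\delta:=y*\phi_\delta$ with a standard mollifier $\phi_\delta$. The point of this step is that mollification does not enlarge the Lipschitz constant: since $\nabla y_\delta=(\nabla y)*\phi_\delta$ and $\int\phi_\delta=1$, one has $\|\nabla y_\delta\|_{L^\infty(\O_\delta)}\le\varrho_0$ exactly, while $y_\delta$ is smooth, $\|y_\delta-y\|_{L^\infty(\O_\delta)}$ is controlled by the modulus of continuity of $y$ and tends to $0$, and $\nabla y_\delta\to\nabla y$ at every Lebesgue point of $\nabla y$, hence almost everywhere.

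Second, with $\delta$ fixed, I would interpolate the now smooth map $y_\delta$. I take the uniform grid of cubes of side $h$ subdivided into simplices by the Kuhn--Freudenthal triangulation (which is conforming and shape-regular) and let $I_h y_\delta$ be the continuous nodal interpolant, which is piecewise affine. The standard finite-element estimates on shape-regular meshes give, on any region whose mesh lies in $\O_\delta$, both $\|\nabla(I_h y_\delta)-\nabla y_\delta\|_{L^\infty}\le C\,h\,\|D^2 y_\delta\|_{L^\infty}$ and $\|I_h y_\delta-y_\delta\|_{L^\infty}\le C\,h\,\|\nabla y_\delta\|_{L^\infty}$, whence $\|\nabla(I_h y_\delta)\|_{L^\infty}\le\varrho_0+C\,h\,\|D^2 y_\delta\|_{L^\infty}$. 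Here lies the main obstacle, and the reason for mollifying first: a direct piecewise-affine interpolation of the merely Lipschitz $y$ would only yield a bound $\|\nabla(I_h y)\|_{L^\infty}\le C_{\mathrm{shape}}\,\varrho_0$ with a shape constant $C_{\mathrm{shape}}>1$ that cannot be pushed to $1$; by interpolating the $C^\infty$ function $y_\delta$ instead, the excess $C\,h\,\|D^2 y_\delta\|_{L^\infty}$ is made arbitrarily small by choosing $h$ small relative to the finite (but $\delta$-dependent) quantity $\|D^2 y_\delta\|_{L^\infty}$.

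Third, I would diagonalize. Choosing $\delta_i\downarrow0$ and then $h_i\downarrow0$ so small that $C\,h_i\,\|D^2 y_{\delta_i}\|_{L^\infty}\le 1/i$ and $C\,h_i\,\varrho_0\le 1/i$, I let $\O_i$ be the interior of the union of those mesh simplices of scale $h_i$ that lie inside $\O_{\delta_i}$; monotonicity $\O_{i-1}\subset\O_i$ is arranged by using dyadically nested meshes together with a decreasing $\delta_i$, and $\mathcal{L}^n(\O\setminus\O_i)\to0$ because $\mathcal{L}^n(\O\setminus\O_{\delta_i})\to0$ while the simplices meeting $\partial\O_{\delta_i}$ form a shell of width $\sim h_i$ of vanishing volume. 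On $\O_i$ the gradient bound, the uniform convergence, and the a.e.\ convergence $\nabla(I_{h_i}y_{\delta_i})\to\nabla y$ (split through $\nabla y_{\delta_i}$: a uniformly small interpolation error plus a.e.\ mollifier convergence) all follow from the two previous steps.

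Finally, to enforce $\tilde y_i=y$ on $\partial\O$ I would glue: fix cutoffs $\theta_i\in C_c^\infty(\O)$ with $\theta_i=1$ on $\O_i$, supported in a slightly larger $\hat\O_i\subset\subset\O_{\delta_i}$ of fixed transition width, and set $\tilde y_i:=\theta_i\,(I_{h_i}y_{\delta_i})+(1-\theta_i)\,y$, so that $\tilde y_i=y$ near $\partial\O$, in particular on $\partial\O$, while $\tilde y_i$ remains piecewise affine on $\O_i$. On the transition layer one has $\nabla\tilde y_i=\theta_i\nabla(I_{h_i}y_{\delta_i})+(1-\theta_i)\nabla y+(\nabla\theta_i)\,(I_{h_i}y_{\delta_i}-y)$; the first two terms form a convex combination bounded by $\varrho_0+C\,h_i\,\|D^2 y_{\delta_i}\|_{L^\infty}$, and the last is bounded by $\|\nabla\theta_i\|_{L^\infty}\,\|I_{h_i}y_{\delta_i}-y\|_{L^\infty(\hat\O_i)}$, which tends to $0$ since the uniform error tends to $0$ while $\|\nabla\theta_i\|_{L^\infty}$ is kept fixed. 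Absorbing both contributions into a single $c_i\to0$ yields $\|\nabla\tilde y_i\|_{L^\infty(\O)}\le\varrho_0+c_i\le\|\nabla y\|_{L^\infty(\O;\R^{n\times n})}+c_i$; uniform convergence persists on the layer because both $I_{h_i}y_{\delta_i}$ and $y$ converge uniformly to $y$, and the a.e.\ gradient convergence extends from the exhaustion $\O_i\nearrow\O$ because the transition layers have vanishing measure. I expect the gradient-bound step of the second paragraph to be the crux, with the boundary gluing a routine but necessary technical addition.
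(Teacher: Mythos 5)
The overall strategy --- mollify to preserve the exact gradient bound, interpolate the smoothed map so that the gradient excess is only $O(h\,\|D^2 y_\delta\|_{L^\infty})$, then diagonalize in $\delta$ and $h$ and glue at the boundary --- is sound, and your observation that direct nodal interpolation of the merely Lipschitz $y$ would only give $C_{\mathrm{shape}}\varrho_0$ with $C_{\mathrm{shape}}>1$ correctly identifies why the mollification step is essential. Note, however, that the paper does not prove this proposition: it is quoted from Ekeland--Temam, so there is no in-paper argument to compare with, and your proof is an independent reconstruction.

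The boundary gluing, which you dismiss as routine, contains a genuine gap. You ask for $\theta_i=1$ on $\O_i$, $\operatorname{supp}\theta_i\subset\hat\O_i\subset\subset\O_{\delta_i}$, and a \emph{fixed} transition width, and these three demands are incompatible: since $\O_i$ is the union of the mesh simplices contained in $\O_{\delta_i}$, it fills $\O_{\delta_i}$ up to a shell of width $O(h_i)$, so the cutoff must drop from $1$ to $0$ across a layer of width $O(h_i)$ and necessarily $\|\nabla\theta_i\|_{L^\infty}\gtrsim h_i^{-1}$. The commutator term $(\nabla\theta_i)\,(I_{h_i}y_{\delta_i}-y)$ is then controlled only by $\|\nabla\theta_i\|_{L^\infty}\bigl(C h_i\varrho_0+\varrho_0\delta_i\bigr)$ (the two contributions being the interpolation error and $\|y_{\delta_i}-y\|_{L^\infty}\le\varrho_0\delta_i$), which with $\|\nabla\theta_i\|_{L^\infty}\sim h_i^{-1}$ is of order $\varrho_0+\varrho_0\delta_i/h_i$ and does \emph{not} tend to zero; this destroys the claimed bound $\|\nabla\tilde y_i\|_{L^\infty}\le\varrho_0+c_i$ with $c_i\to0$. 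The repair is to introduce a third, intermediate length scale $W_i$ with $h_i\ll W_i$, $\delta_i\ll W_i$ and $W_i\to0$: let $\theta_i$ drop from $1$ on $\O_{2W_i}$ to $0$ outside $\O_{W_i}$, so that $\|\nabla\theta_i\|_{L^\infty}\lesssim W_i^{-1}$ while the transition layer still lies deep inside $\O_{\delta_i}$, and redefine $\O_i$ as the union of the simplices contained in $\{\theta_i=1\}$. Then the commutator term is $O\bigl((h_i+\delta_i)\varrho_0/W_i\bigr)\to0$, the exhaustion $\mathcal{L}^n(\O\setminus\O_i)\to0$ survives because $W_i\to0$, and the remainder of your argument (uniform convergence, a.e.\ convergence of gradients via the exhaustion, monotonicity via nested dyadic meshes) goes through unchanged.
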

\SECOND With this proposition at hand, we may now sketch the construction of the piecewise-affine recovery sequence: \EOR Let us denote $\varrho_i:=\|\nabla \tilde y_i\|_{L^\infty(\O;\R^{n\times n})}$. As $\varrho_i\le\varrho+c_i$ we get  
 $$\varrho/(\varrho+c_i)\tilde y_i\in\mathcal{A}_\varrho .$$ 
 Moreover, $\varrho/(\varrho+c_i)\tilde y_i$ is also piecewise affine on $\O_i$. 
 Applying analogous reasoning as in \eqref{modulus} we get that 
 $|J(y)-J(\varrho/(\varrho+c_i)\tilde y_i)|$ is arbitrarily small if $i\in\N$ is large because $c_i\to 0$. Consequently, the infimum of $J$ can be approximated by maps which are piecewise affine  on open subsets of $\O$ the Lebesgue measure of which approaches $\mathcal{L}^n(\O)$. It follows from the proof of \cite[Proposition~2.9, p.~318]{ekeland-temam} that these subsets $\O_i$ are \SECOND polyhedral \EOR domains containing $\{x\in\O; \text{ dist}(x,\partial\O)>1/i\}$. \SECOND In fact, the only reason why one has to construct these subdomains $\Omega_i$ is that either the domain itself is not polyhedral or the boundary datum is not piecewise affine. \EOR  Hence, if $\O$ is already a \SECOND polyhedral \EOR  domain and $y$ is affine on each affine segment of $\partial\O$ then  for all $i\in\N$  we may set  $\O_i:=\O$ and $y_i$ can be taken piecewise affine on the whole $\O$ for all $i$.

 \bigskip

The provided relaxation in Proposition \ref{prop:one} utilizes (gradient) Young measures. While this is a useful tool often used in the calculus of variations, it is still more common in some applications to use a relaxation by means of the  so-called ``infimum-formula'' \eqref{inf-formula} below,  see e.g.~\cite[Proposition~7.2]{k-p1}.  We show in the next proposition how such an infimum formula can be phrased in terms of gradient Young measures. 
To this end, we
set for  $A \in \overline{B(0,\varrho)}$  
\begin{align*}
\mathcal{GY}^\infty_{A,\varrho}:= & \{ \nu \in \mathcal{GY}^\infty(\Omega; \R^{n \times n}): \text{$\nu$ is a homogeneous (i.e., independent of $x$) measure },\\
&\operatorname{supp} \nu \subset \overline{B(0,\varrho)} \text{ and } \bar{\nu} = A\}
\end{align*}
and 
\begin{align}\label{eq:relaxation}
W^\mathrm{rel}(A):=\min_{\nu \in \mathcal{GY}^\infty_{A,\varrho}} \int_{\R^{n \times n}} W(F) \nu(\d F).
\end{align}
\SECOND
Notice that a minimizer is guaranteed to exist in \eqref{eq:relaxation} by the direct method. Indeed, any minimizing sequence of \eqref{eq:relaxation} is automatically bounded on measures (since we work with probability measures here) and the first moment as well as the property of the support being contained in a $\varrho$-ball are preserved under weak$^\star$ convergence.
\EOR
\begin{proposition}
\label{prop:two}
Let $W\in C(\overline{B(0,\varrho)})$ for some $\varrho>0$. 
For $A\in\R^{n\times n}$ set
\begin{equation}
\bar W(A)
:= \begin{cases}
 W^\mathrm{inf}(A) &\text{if $|A|<\varrho$,} \\
W(A) &\text{if $|A|=\varrho$},
\end{cases}
\label{inf-formula}
\end{equation}
where 
\begin{align*}
 W^\mathrm{inf}(A):= \inf \left\{ \tfrac{1}{\mathcal{L}^n(\O)} \int_{\Omega} W  (A + \nabla \phi) \d x : \phi\in  W_0^{1, \infty} (\Omega;\R^n) \text{ with } \FIRST  A\cdot+ \phi \in \mathcal{A}_\varrho \EOR \text{ a.e.\ in } \Omega \right\}.
 \end{align*}

Then  it holds that 
$$
W^\mathrm{rel}(A)=\bar W(A).
$$
\end{proposition}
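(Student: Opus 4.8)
The plan is to treat the two branches of \eqref{inf-formula} separately, and in the interior branch $|A|<\varrho$ to establish the two inequalities $W^\mathrm{rel}(A)\le W^\mathrm{inf}(A)$ and $W^\mathrm{inf}(A)\le W^\mathrm{rel}(A)$.

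\emph{The boundary case $|A|=\varrho$ is immediate.} The Frobenius ball $\overline{B(0,\varrho)}$ is a Euclidean ball in $\R^{n\times n}\cong\R^{n^2}$, hence strictly convex, so every matrix with $|A|=\varrho$ is an extreme point of it. Consequently the only probability measure supported in $\overline{B(0,\varrho)}$ whose barycenter equals $A$ is the Dirac mass $\delta_A$. Since $\delta_A$ is the (homogeneous) gradient Young measure generated by the constant gradient $A$, we obtain $\mathcal{GY}^\infty_{A,\varrho}=\{\delta_A\}$ and therefore $W^\mathrm{rel}(A)=W(A)=\bar W(A)$, matching the second line of \eqref{inf-formula}.

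\emph{Upper bound for $|A|<\varrho$.} Fix any competitor $\phi\in W^{1,\infty}_0(\Omega;\R^n)$ with $|A+\nabla\phi|\le\varrho$ a.e., which is exactly the admissibility constraint in $W^\mathrm{inf}(A)$. A standard Vitali/Riemann--Lebesgue construction (fill $\Omega$ up to a null set with disjoint rescaled copies $a_i+r_i\Omega$ and set $y_k(x)=Ax+r_i\phi((x-a_i)/r_i)$ inside each copy and $y_k(x)=Ax$ elsewhere, letting the copies shrink) produces Lipschitz maps with $\nabla y_k$ taking values in $\overline{B(0,\varrho)}$, with $y_k\to Ax$ uniformly, and generating the homogeneous measure $\nu_\phi$ determined by $\int_{\R^{n\times n}}g\,\mathrm{d}\nu_\phi=\frac{1}{\mathcal{L}^n(\Omega)}\int_\Omega g(A+\nabla\phi)\,\mathrm{d}x$. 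As $\int_\Omega\nabla\phi\,\mathrm{d}x=0$, its barycenter is $A$, so $\nu_\phi\in\mathcal{GY}^\infty_{A,\varrho}$ and thus $W^\mathrm{rel}(A)\le\int_{\R^{n\times n}}W\,\mathrm{d}\nu_\phi=\frac{1}{\mathcal{L}^n(\Omega)}\int_\Omega W(A+\nabla\phi)\,\mathrm{d}x$. Taking the infimum over $\phi$ yields $W^\mathrm{rel}(A)\le W^\mathrm{inf}(A)$.

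\emph{Lower bound for $|A|<\varrho$; this is the main obstacle.} Let $\nu$ minimize \eqref{eq:relaxation}. Since $\operatorname{supp}\nu\subset\overline{B(0,\varrho)}$ is convex, Proposition~\ref{mueller-proposition} provides $y_k\stackrel{Y}{\rightharpoonup}(Ax,\nu)$ with $\|\nabla y_k\|_{L^\infty(\Omega;\R^{n\times n})}\le\varrho+\varepsilon_k$, $\varepsilon_k\to0$, and $y_k\to Ax$ in $C(\overline\Omega)$; after extending $W$ continuously off the ball by Tietze (as in the proof of Proposition~\ref{prop:one}), the fundamental theorem on Young measures gives $\frac{1}{\mathcal{L}^n(\Omega)}\int_\Omega W(\nabla y_k)\,\mathrm{d}x\to\int_{\R^{n\times n}}W\,\mathrm{d}\nu$. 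The difficulty is that $W^\mathrm{inf}$ requires \emph{affine} boundary data $y_k=Ax$ on $\partial\Omega$, whereas forcing this by a cut-off inflates the gradient out of $\overline{B(0,\varrho)}$; I would resolve this in two moves. First, a De Giorgi slicing over a boundary layer of width $\delta_k$ replaces $y_k$ by $\tilde y_k=Ax+\tilde\phi_k$ with $\tilde\phi_k\in W^{1,\infty}_0$, where $\nabla\tilde y_k=\theta\nabla y_k+(1-\theta)A+\nabla\theta\otimes(y_k-Ax)$; choosing $\delta_k\to0$ with $\|y_k-Ax\|_{L^\infty}/\delta_k\to0$ keeps $\|\nabla\tilde y_k\|_{L^\infty}\le\varrho+\varepsilon_k'$ with $\varepsilon_k'\to0$ and makes the layer measure vanish. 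Second --- and here the \emph{strict} gap $\varrho-|A|>0$ is essential --- I interpolate toward the affine map by setting $w_k:=Ax+t_k\tilde\phi_k$ with $t_k=R_k/(1+R_k)$ and $R_k=(\varrho-|A|)/\varepsilon_k'\to+\infty$. Then $\nabla w_k=(1-t_k)A+t_k\nabla\tilde y_k$ obeys $|\nabla w_k|\le(1-t_k)|A|+t_k(\varrho+\varepsilon_k')\le\varrho$ because $R_k\varepsilon_k'=\varrho-|A|$, while $w_k=Ax$ on $\partial\Omega$; hence $w_k$ is admissible for $W^\mathrm{inf}(A)$. Since $t_k\to1$ forces $\nabla w_k-\nabla\tilde y_k\to0$ uniformly, the modulus-of-continuity estimate used in \eqref{modulus}, together with the vanishing layer measure, gives $\frac{1}{\mathcal{L}^n(\Omega)}\int_\Omega W(\nabla w_k)\,\mathrm{d}x\to\int_{\R^{n\times n}}W\,\mathrm{d}\nu=W^\mathrm{rel}(A)$. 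Therefore $W^\mathrm{inf}(A)\le W^\mathrm{rel}(A)$, which combined with the upper bound closes the proof. The crux is precisely the reconciliation of the hard locking constraint with the affine boundary condition, which the interpolation step handles cleanly without invoking any continuity of $W^\mathrm{inf}$.
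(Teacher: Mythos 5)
Your proof is correct, and the boundary case $|A|=\varrho$ (extreme point of the strictly convex ball forces $\delta_A$) as well as the inequality $W^\mathrm{rel}(A)\le W^\mathrm{inf}(A)$ (tiling/homogenization of a single test function $\phi$) coincide with the paper's argument, which phrases the latter via the homogenization Lemma~\ref{homogenization} applied to $\nu_x=\delta_{A+\nabla\phi(x)}$. Where you genuinely diverge is the reverse inequality. The paper never touches the optimal Young measure: it takes a minimizing sequence $\{\phi_k\}$ for $W^\mathrm{inf}(A)$, lets $\{A+\nabla\phi_k\}$ generate a Young measure, homogenizes it by Lemma~\ref{homogenization} into an element of $\mathcal{GY}^\infty_{A,\varrho}$, and concludes $W^\mathrm{inf}(A)\ge\min$ in three lines --- the admissibility of the competitors is inherited for free because the $\phi_k$ already satisfy both the affine boundary condition and the locking constraint. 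You instead start from the minimizer $\nu$ of \eqref{eq:relaxation}, generate it via Proposition~\ref{mueller-proposition} with gradients in a $\varepsilon_k$-neighborhood of the ball, restore the affine boundary datum by a cut-off layer, and then repair the constraint by the interpolation $w_k=Ax+t_k\tilde\phi_k$ with $t_k$ tuned so that $(1-t_k)|A|+t_k(\varrho+\varepsilon_k')=\varrho$; the computation checks out and the uniform-continuity estimate closes the limit, so the argument is sound. Your route is longer but more constructive: it produces an explicit recovery sequence for $W^\mathrm{inf}(A)$ realizing the optimal measure (in the spirit of the rescaling trick in the proof of Proposition~\ref{prop:one}), and it makes transparent where strict interiority $|A|<\varrho$ enters. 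The paper's route is shorter and avoids Proposition~\ref{mueller-proposition} altogether for this step, at the price of leaning on the homogenization lemma and on the existence of a minimizer in \eqref{eq:relaxation} only for the other inequality.
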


\begin{proof}
In order to prove the claim, we will need the following homogenization result that is a slight variation of \cite[Theorem~2.1]{k-p} and is proved by a blow-up argument. 

\bigskip 

\begin{lemma}\label{homogenization}
Let  $\{u_k\}_{k\in\N}$ be a bounded sequence in $W^{1,\infty}(\O;\R^n)$ with $u_k(x)=Ax$ in $\partial \Omega$. 
Let the Young measure $\nu\in \mathcal{GY}^\infty(\Omega; \R^{n \times n})$ be generated by $\{\nabla u_k\}_{k\in\N}$. Then there is another bounded sequence $\{w_k\}_{k\in\N}\subset\FIRST  W^{1,\infty}(\Omega;\R^{n})$ \EOR with $w_k(x)=Ax$ in $\partial \Omega$ that generates a homogeneous measure  $\bar\nu$   defined through
\begin{align*}\label{homog} 
\int_{\R^{n\times n}} f(s)\bar\nu(\md s)= \frac{1}{\mathcal{L}^n(\O)}\int_{\O}\int_{\R^{n \times n}} f(s)\nu_x(\md s)\,\md x , \end{align*}
for any $f\in C(\R^{n \times n})$. 
Moreover, if $\{\nabla u_k\}_{k\in\N} \subset \mathcal{A}_\varrho$ for a.e.\ $x \in \Omega$ then also $\mathrm{supp} \, \bar \nu \subset \overline{B(0,\varrho)}$.
\end{lemma}

Using this lemma, we prove  Proposition~\ref{prop:two} for $|A| < \varrho$. In this case, we can, for any $\phi \in W^{1,\infty}_0(\Omega;\R^n)$ with $A+\nabla \phi \in \mathcal{A}_\varrho$ define for almost every $x\in\O$ the Young measure $\nu_x:=\delta_{A+\nabla \phi(x)}$. According to the homogenization Lemma \ref{homogenization}, this defines a homogeneous Young measure \SECOND $\tilde{\mu} \EOR\in \mathcal{GY}_{A,\varrho}^\infty$ through
\SECOND 
$$
\int_{\R^{n\times n}} f(s)  \tilde \mu(\md s) = \frac{1}{\mathcal{L}^n(\O)}\int_{\O}\int_{\R^{n \times n}} f(s)\delta_{A+\nabla \phi(x)}(\md s)\,\md x , 
$$
\EOR
for any $f\in C(\R^{n \times n})$. \SECOND Plugging in here $f(s) = s$, we obtain that the first moment of $\tilde{\mu}$ is $A$ since $\phi$ is vanishing on the boundary. \EOR 
Thus,
$$
\min_{\nu \in \mathcal{GY}^\infty_{A,\varrho}} \int_{\R^{n \times n}} W(F) \nu(\d F) \leq \int_{\R^{n \times n}} W(F) \bar{\mu}(\d F) = \frac{1}{\mathcal{L}^n(\O)} \int_{\Omega} W  (A + \nabla \phi) \d x;
$$
and taking the infimum on the right hand side gives that $$\min_{\nu \in \mathcal{GY}^\infty_{A,\varrho}} \int_{\R^{n \times n}} W(F) \nu(\d F) \leq \bar W(A)$$
by \eqref{inf-formula}.

 On the other hand, let us take a sequence $\{\phi_k\}_{k \in \N} \subset W^{1,\infty}_0(\Omega;\R^n)$ such that 
$\frac{1}{\mathcal{L}^n(\O)} \int_{\Omega} W  (A + \nabla \phi_k) \d x \to \bar W(A)$. Then $\{A+\nabla \phi_k\}_{k \in \N}$ generates a Young measure $\nu_x \in \mathcal{GY}^\infty(\Omega;\R^{n \times n})$ so that 
$$
\bar W(A) = \lim_{k \to +\infty} \frac{1}{\mathcal{L}^n(\O)} \int_{\Omega} W  (A + \nabla \phi_k) \d x = \frac{1}{\mathcal{L}^n(\O)} \int_{\Omega} \int_{\R^{n\times n}} W  (s) \nu_x (\d s) \d x = \int_{\R^{n\times n}} W(s) \SECOND \tilde{\nu} (\d s) \EOR,
$$
where the homogeneous measure \SECOND $\bar{\nu} \in \mathcal{GY}^\infty_{A, \varrho}$ \EOR is defined according to Lemma \ref{homogenization}. \SECOND Notice again that taking $W(F) = F$ in the above formula guarantees the right first moment on $\tilde{\nu}$. \EOR Therefore, we obtain that 
$$
\bar W (A) \geq \min_{\nu \in \mathcal{GY}^\infty_{A,\varrho}} \int_{\R^{n \times n}} W(F) \nu(\d F).
$$

Let us now concentrate on the case when $|A|=\varrho$. In this case, we use that $\overline{B(0,\varrho)}$ is a strictly convex set (i.e., if $|A_1|=|A_2|=\varrho$, $A_1\neq A_2$, then $|\lambda A_1+(1-\lambda)A_2|<\rho$ for all $0<\lambda<1$) so that the only homogeneous Young measure that is supported in $\overline{B(0,\varrho)}$ and satisfies \SECOND that the first moment of  $\tilde{\nu}$ denoted $\overline{\tilde \nu}$ equals $A$  is the Dirac measure $\delta_A$. Indeed, the modulus of the first moment is a convex function of the measure supported on  $\overline{B(0,\varrho)}$. Hence it is maximized at some extreme point, i.e., if $\tilde\nu=\delta_A$\    cf.~also \cite[Theorem~8.4 on p.~147]{conway}. \EOR From this, it readily follows that 
$$
\min_{\nu \in \mathcal{GY}^\infty_{A,\varrho}} \int_{\R^{n \times n}} W(F) \nu(\d F) = \int_{\R^{n \times n}} W(F) \delta_A(\d F) = W(A) \quad \text{if} \quad |A|=\varrho.
$$
\end{proof}

Let us point out that the infimum formula \eqref{inf-formula} obtained in Proposition \ref{prop:two} strongly relies on the fact that the locking constraint based on \eqref{locking} requires the strains to be constrained to a \emph{strictly convex} region. Of course, one could imagine more general situations, in which the locking constraint requires the strains to lie in $\overline{\mathcal{R}_\varrho}$, where $\mathcal{R}_\varrho$ is a convex open set containing $0$ that, however, is not necessarily  strictly convex. In such a situation the infimum formula is more involved and has been found by Wagner \cite{wagner}. We show in the next proposition that it follows easily from the Young measure representation.

\begin{proposition} \label{rem34}
Let $\mathcal{R}_\varrho$ be a convex open set containing $0$ and let $W: \R^{n\times n} \to [0,+\infty]$ be defined as
$$ W(F) \begin{cases} <+\infty & \text{if } F \in \overline{\mathcal{R}_\varrho}, \\ 
= +\infty & \text{if } F \notin \overline{\mathcal{R}_\varrho}\end{cases}$$
and assume that $W$ is continuous on its domain. Then the relaxation of $W$  reads
\begin{equation}
W^\mathrm{rel}(A)
= \begin{cases}
W^\mathrm{inf}(A) &\text{if $A \in \mathcal{R}_\varrho$,}  \\
\lim_{\varepsilon \to 0} W^\mathrm{inf}\left(\frac{|A|-\varepsilon}{|A|} A\right)  &\text{if $A \in \partial \mathcal{R}_\varrho$},
\end{cases}
\label{infFormula2}
\end{equation}
where 
$$
W^\mathrm{inf}(A) := \inf \Big{ \{\frac{1}{\mathcal{L}^n(\O)}} \int_{\Omega} W  (A + \nabla \phi) \d x : \phi \in W_0^{1, \infty} (\Omega) 
\text{ with } A+\nabla \phi \in \overline{\mathcal{R}_\varrho} \text{ a.e.\ in $\Omega$} \Big \}. 
$$
\end{proposition}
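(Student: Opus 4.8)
The plan is to read the statement through the Young--measure lens of Proposition~\ref{prop:two}, i.e.\ to interpret the relaxation as
$$W^{\mathrm{rel}}(A):=\min_{\nu\in\mathcal{GY}^\infty_{A,\mathcal{R}_\varrho}}\int_{\R^{n\times n}}W(F)\,\nu(\d F),$$
where $\mathcal{GY}^\infty_{A,\mathcal{R}_\varrho}$ denotes those homogeneous $\nu\in\GY$ with $\operatorname{supp}\nu\subset\overline{\mathcal{R}_\varrho}$ and $\bar\nu=A$ (as usual for the $W^{1,\infty}$ machinery we take $\overline{\mathcal{R}_\varrho}$ bounded, hence compact; the minimum then exists by the direct method exactly as noted before Proposition~\ref{prop:two}). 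I split the argument into $A\in\mathcal{R}_\varrho$ and $A\in\partial\mathcal{R}_\varrho$. For $A\in\mathcal{R}_\varrho$ the proof is \emph{verbatim} the interior case $|A|<\varrho$ of Proposition~\ref{prop:two}, with $\overline{B(0,\varrho)}$ replaced by $\overline{\mathcal{R}_\varrho}$: the inequality ``$\le$'' comes from testing with the Young measure $\nu_x=\delta_{A+\nabla\phi(x)}$ and homogenizing via Lemma~\ref{homogenization}, and ``$\ge$'' from homogenizing a Young measure generated by a minimizing sequence $\{A+\nabla\phi_k\}$ for $W^{\mathrm{inf}}(A)$. Since \emph{strict} convexity of the constraint set is never used here, one obtains $W^{\mathrm{rel}}(A)=W^{\mathrm{inf}}(A)$ on all of $\mathcal{R}_\varrho$.

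The whole difficulty sits in the boundary case $A\in\partial\mathcal{R}_\varrho$, where the constraint set need not be strictly convex, so that $\delta_A$ is in general not the only admissible measure and $W^{\mathrm{rel}}(A)$ may be strictly below $W(A)$. Put $\lambda_\varepsilon:=(|A|-\varepsilon)/|A|\in(0,1)$ for $\varepsilon\in(0,|A|)$; since $0\in\mathcal{R}_\varrho$ and $A\in\overline{\mathcal{R}_\varrho}$, convexity gives $\lambda_\varepsilon A\in\mathcal{R}_\varrho$, so by the first case $W^{\mathrm{inf}}(\lambda_\varepsilon A)=W^{\mathrm{rel}}(\lambda_\varepsilon A)$, and it suffices to prove $W^{\mathrm{rel}}(A)=\lim_{\varepsilon\to0}W^{\mathrm{rel}}(\lambda_\varepsilon A)$. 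For the upper bound I would use a scaling trick at the level of measures: let $\nu$ realize $W^{\mathrm{rel}}(A)$, generated by $\{\nabla u_k\}$, and observe that $\nabla(\lambda_\varepsilon u_k)=\lambda_\varepsilon\nabla u_k$ generates the image measure $\mu_\varepsilon$ of $\nu$ under $F\mapsto\lambda_\varepsilon F$. Then $\mu_\varepsilon$ is again a homogeneous gradient Young measure with first moment $\lambda_\varepsilon A$ and, because $\overline{\mathcal{R}_\varrho}$ is convex and contains $0$, with $\operatorname{supp}\mu_\varepsilon\subset\lambda_\varepsilon\overline{\mathcal{R}_\varrho}\subset\overline{\mathcal{R}_\varrho}$; hence $\mu_\varepsilon\in\mathcal{GY}^\infty_{\lambda_\varepsilon A,\mathcal{R}_\varrho}$. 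Consequently
$$W^{\mathrm{inf}}(\lambda_\varepsilon A)=W^{\mathrm{rel}}(\lambda_\varepsilon A)\le\int_{\R^{n\times n}}W\,\d\mu_\varepsilon=\int_{\R^{n\times n}}W(\lambda_\varepsilon F)\,\nu(\d F),$$
and letting $\varepsilon\to0$, using continuity of $W$ on the compact set $\overline{\mathcal{R}_\varrho}$ (dominated convergence), yields $\limsup_{\varepsilon\to0}W^{\mathrm{inf}}(\lambda_\varepsilon A)\le W^{\mathrm{rel}}(A)$.

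For the matching lower bound I would argue by compactness and closedness of the constraint class. Pick, along a subsequence realizing $\liminf_{\varepsilon\to0}W^{\mathrm{inf}}(\lambda_\varepsilon A)$, minimizers $\mu^{(\varepsilon)}\in\mathcal{GY}^\infty_{\lambda_\varepsilon A,\mathcal{R}_\varrho}$; these are probability measures on the fixed compact set $\overline{\mathcal{R}_\varrho}$, so up to a further subsequence $\mu^{(\varepsilon)}\wtostar\mu^{(0)}$ with $\operatorname{supp}\mu^{(0)}\subset\overline{\mathcal{R}_\varrho}$. Its first moment is $\lim_\varepsilon\lambda_\varepsilon A=A$, and passing to the limit in the Jensen inequalities \eqref{jensen-qc} of Theorem~\ref{qc-measures-characterization} (each quasiconvex $f$ is continuous, and both $f(\lambda_\varepsilon A)\to f(A)$ and $\int f\,\d\mu^{(\varepsilon)}\to\int f\,\d\mu^{(0)}$) gives $f(A)\le\int f\,\d\mu^{(0)}$; hence $\mu^{(0)}\in\mathcal{GY}^\infty_{A,\mathcal{R}_\varrho}$ is an admissible competitor. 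Testing the weak$^*$ convergence against a continuous compactly supported extension of $W$ (Tietze, as in Proposition~\ref{prop:one}) then gives $W^{\mathrm{rel}}(A)\le\int W\,\d\mu^{(0)}=\lim_\varepsilon W^{\mathrm{inf}}(\lambda_\varepsilon A)$. Combining the two bounds shows that the limit exists and equals $W^{\mathrm{rel}}(A)$, which is exactly the boundary branch of \eqref{infFormula2}. I expect the main obstacle to be precisely this boundary analysis, and within it the verification that the weak$^*$ limit $\mu^{(0)}$ of the optimal measures is again a \emph{gradient} Young measure with the correct first moment $A$; the scaling observation $\nabla(\lambda_\varepsilon u_k)=\lambda_\varepsilon\nabla u_k$ is what makes the upper bound clean and avoids any boundary-layer corrections.
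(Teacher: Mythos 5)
Your proposal is correct and follows essentially the same route as the paper: the interior case is the verbatim argument of Proposition~\ref{prop:two}, the upper bound at the boundary comes from pushing the optimal measure $\nu_A$ forward under $F\mapsto\frac{|A|-\varepsilon}{|A|}F$ and using continuity of $W$ on its (compact) domain, and the lower bound comes from weak$^*$ compactness of the minimizers for $\frac{|A|-\varepsilon}{|A|}A$ and closedness of $\mathcal{GY}^\infty_{A,\mathcal{R}_\varrho}$. Your explicit verification (via the Jensen inequalities) that the weak$^*$ limit of the optimal measures is again a gradient Young measure with first moment $A$ is a detail the paper only asserts, and is a welcome addition.
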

Thus, the characterization in the domain $\mathcal{R}_\varrho$ stays the same as above but at the boundary $\partial \mathcal{R}_\varrho$ we replace the original function by a radial limit of the relaxation obtained inside the domain. Actually, Wagner \cite{wagner-rel} provides an example showing that $W^\mathrm{rel}$ obtained by \eqref{inf-formula} and \eqref{infFormula2}, respectively, differ for not strictly convex domains. 


\begin{proof}
To show \eqref{infFormula2}, we have just to prove the representation on the boundary $\partial \mathcal{R}_\varrho$, since inside the domain the proof is the same as the one given for Proposition \ref{prop:two}.

To do so, let us first realize that having a sequence of Young measures $\{\mu_k\}_{k \in \N}$ supported in $\overline{\mathcal{R}_\varrho}$ that converges weakly$^\star$ to another Young measure $\mu$, we can assure that $\mu$ is supported in $\overline{\mathcal{R}_\varrho}$, too. Indeed, this claim can be easily seen by using test functions that are one on any Borel set in $\R^{n \times n} \setminus \overline{\mathcal{R}_\varrho}$ and zero in $\overline{\mathcal{R}_\varrho}$. By using this claim, we see by the direct method that the minimum  
$$
\min_{\nu \in \mathcal{GY}^\infty_{A,\mathcal{R}_\varrho}} \int_{\R^{n \times n}} W(F) \nu(\d F) , 
$$
where
\begin{align*}
\mathcal{GY}^\infty_{A,\mathcal{R}_\varrho}& = \{ \nu \in \mathcal{GY}^\infty(\Omega; \R^{n \times n}): \text{$\nu$ is a homogeneous measure with } \mathrm{supp} \, \nu \subset \overline{\mathcal{R}_\varrho} \text{ and } \bar{\nu} = A\} 
\end{align*}
is attained for all $A \in \overline{\mathcal{R}_\varrho}$. Let us thus select the minimizer for some given $A \in \partial \mathcal{R}_\varrho$, called $\nu_A$. Further, define the homogeneous measure $\tilde\nu^\varepsilon\in \mathcal{GY}^\infty_{\frac{|A|-\varepsilon}{|A|} A, \mathcal{R}_\varrho}$ for every $f\in C(\R^{n\times n})$ 
by the formula 
$$
\int_{\R^{n\times n}}f(F)\tilde\nu^\varepsilon(\md F)=\int_{\R^{n\times n}}f\left(\frac{|A|-\varepsilon}{|A|} F\right)\nu_A(\md F)\ .
$$
Therefore, we have that 
\begin{align*}
 \int_{\R^{n \times n}} W\left(F\right) \tilde\nu^\varepsilon(\d F)&=\int_{\R^{n \times n}} W\left(\frac{|A|-\varepsilon}{|A|}F \right) \nu_A(\d F) 
\\
&\geq \min_{\nu \in \mathcal{GY}^\infty_{\frac{|A|-\varepsilon}{|A|} A, \mathcal{R}_\varrho}} \int_{\R^{n \times n}} W(F) \nu(\d F) = W^\mathrm{inf} \left(\frac{|A|-\varepsilon}{|A|} A \right)
\end{align*}
since $\frac{|A|-\varepsilon}{|A|} A$ is in the interior of $\mathcal{R}_\varrho$ and thus Proposition~\ref{prop:two} applies.
Taking the limit $\varepsilon \to 0$, we get, relying on the continuity of $W$ on its domain,
\SECOND
\begin{align*}
\lim_{\varepsilon \to 0} W^\mathrm{inf} \left(\frac{|A|-\varepsilon}{|A|} A \right) & \leq \lim_{\varepsilon \to 0} \int_{\R^{n \times n}} W\left(\frac{|A|-\varepsilon}{|A|}F \right) \nu_A(\d F) \\   &=\int_{\R^{n \times n}} W(F) \nu_A(\d F) =\min_{\nu \in \mathcal{GY}^\infty_{A,\mathcal{R}_\varrho}} \int_{\R^{n \times n}} W(F) \nu(\d F). \ 
\end{align*}
\EOR
On the other hand, let us define a sequence of homogeneous Young measures $\mu_\varepsilon$ defined through
$$
\mu_\varepsilon =  \mathrm{argmin}_{\nu \in \mathcal{GY}^\infty_{\frac{|A|-\varepsilon}{|A|} A, \mathcal{R}_\varrho}} \int_{\R^{n \times n}} W(F) \nu(\d F).
$$
At least a subsequence (not relabeled) of this sequence converges weakly$^\star$ to another homogeneous Young measure $\mu \in \mathcal{GY}^\infty_{A,\mathcal{R}_\varrho}$. Therefore, we have that
$$
\lim_{\varepsilon \to 0}  \int_{\R^{n \times n}} W(F) \mu_\varepsilon(\d F) \geq   W^\mathrm{rel}(A) .
$$
Altogether, we see that $ \lim_{\varepsilon \to 0} W^\mathrm{inf} \left(\frac{|A|-\varepsilon}{|A|} A \right) = W^\mathrm{rel}(A)$ at least for the subsequence selected above. Nevertheless, since the limit is uniquely determined, we can deduce that convergence is obtained even along the whole sequence.
\end{proof} 

\section{Existence of minimizers under locking constraint on the determinant }\label{lock-det}

In this section, we consider the locking constraint \SECOND $L\leq 0$ with $L$ as in \EOR \eqref{locking-det}, i.e., we will work with deformation gradients that can only lie in the set 
$$
\mathcal{S}_\varepsilon=\{F\in \R^{n \times n}: \det F \geq \varepsilon\}
$$
for some $\varepsilon > 0$.
Before embarking into our discussion, let us stress that imposing the above constraint puts us, from the mathematical point of view, into a very different situation than the constraint based on \eqref{locking}. Indeed, the set $\mathcal{S}_\varepsilon$ is non-convex while the strains constrained by \eqref{locking} lie in a convex set. 

Let us note that relaxation on the set $\mathcal{S}_\varepsilon$ (or even the case $\det F > 0$) is largely open to date and only scattered results can be found in the literature; cf.~e.g.~\cite{benesova-kamp,benesova-kruzik,krw}. In fact, even the existence of minimizers for quasiconvex energies that take the value $+\infty$ outside the set $\{F\in \R^{n \times n}: \det F > 0\}$ remains open to-date (see \cite{ball-puzzles,benesova-kruzik-wlsc}). 

We do not tackle those issues here and study energy densities that are \emph{quasiconvex and finite on the whole space $\R^{n \times n}$ (and not just on the set $\mathcal{S}_\varepsilon$, differently to Section \ref{sec-relaxation}) but constrain the deformation gradients to lie in $\mathcal{S}_\varepsilon$}.

In more detail, let $\O\subset\R^n$ with $n=2,3$ and let $W$ be a continuous function on $\R^{n \times n}$ that is  quasiconvex. Then, we consider the following minimization problem:
\begin{align}\label{functionalJ2} 
&\text{minimize} && J(y):=\int_\O W(\nabla y(x))\,\md x-\ell(y)  ,\nonumber\\
&\text{subject to}&& \nabla y(x)\in\mathcal{S}_\varepsilon \cap \overline{B(0,\varrho)} \text{ a.e.\ in $\Omega$} ,\  y\in W^{1,\infty}(\O;\R^n) ,\ y=y_0\ \md A \mbox{ a.e.\ on }\ \Gamma,
\end{align}
where $y_0$ is a given function in $W^{1,\infty}(\O;\R^n)$ the gradient of which lies in $\mathcal{S}_\varepsilon$ almost everywhere. Moreover, $\Gamma\subset\partial\O$ has a positive  $(n-1)$-dimensional Hausdorff measure and $\ell$ is a bounded linear functional on $W^{1,\infty}(\Omega; \R^n)$.

 For simplicity, we work on Lipschitz functions only, although a generalization to Sobolev deformations lying in $W^{1,p}(\Omega;\R^n)$ with $p > n$ is possible with minor changes in the proofs.
\begin{remark}
The boundary datum $y_0$ in problem \eqref{functionalJ2} is defined on $\Omega$, while it would be more natural to define it on $\Gamma$  only.  Nevertheless, it is an open problem to-date to explicitly characterize the class of boundary data that allow for an extension as needed in problem \eqref{functionalJ2}, cf.\ the proof below. We refer to \cite[Section 7]{benesova-kruzik-wlsc} for a related discussion.
\end{remark}

We then have the following:

\begin{proposition}\label{prop:two1}
Let $\varepsilon >0$, $\varrho >\sqrt{n}\varepsilon^{1/n}$, $\ell\in (W^{1,\infty}(\O;\R^n))^*$. Let $W:\R^{n\times n}\to\R$ be quasiconvex, continuous, and let  $y_0\in W^{1,\infty}(\O;\R^n)$, $\nabla y_0\in\mathcal{S}_\varepsilon \SECOND \cap \overline{B(0,\varrho)} \EOR$ a.e.\ in $\Omega$. Then there exists a solution to \eqref{functionalJ2}.
\end{proposition}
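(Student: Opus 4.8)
The plan is to run the direct method of the calculus of variations, the one genuinely delicate point being that, in contrast to Section~\ref{sec-relaxation}, the constraint $\nabla y\in\mathcal{S}_\varepsilon$ confines the gradient to a \emph{nonconvex} set, so closedness of the admissible class under weak$^*$ convergence cannot be read off from convexity and must instead be extracted from the weak continuity of subdeterminants. Before anything else I would check that the admissible class is nonempty: for any $F$ with $\det F\ge\varepsilon$ the arithmetic--geometric mean inequality applied to the singular values $\sigma_1,\dots,\sigma_n>0$ gives $|F|^2=\sum_i\sigma_i^2\ge n\big(\prod_i\sigma_i\big)^{2/n}=n(\det F)^{2/n}\ge n\varepsilon^{2/n}$, with equality for $F=\varepsilon^{1/n}\mathrm{Id}$. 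Hence the hypothesis $\varrho>\sqrt{n}\,\varepsilon^{1/n}$ is precisely what makes $\mathcal{S}_\varepsilon\cap\overline{B(0,\varrho)}$ nonempty (indeed with nonempty interior), which is consistent with the assumed datum $y_0$ being admissible.

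Next I would take a minimizing sequence $\{y_k\}_{k\in\N}$ of $J$ over the admissible class. Since $\nabla y_k\in\overline{B(0,\varrho)}$ a.e., the gradients are bounded in $L^\infty(\O;\R^{n\times n})$; combined with the Dirichlet condition $y_k=y_0$ on $\Gamma$ (which has positive $(n-1)$-dimensional Hausdorff measure) and the generalized Poincar\'e inequality \cite[Thm.~1.32]{rou}, this bounds $\{y_k\}$ in $W^{1,\infty}(\O;\R^n)$. By the Banach--Alaoglu theorem together with the Arzel\`a--Ascoli theorem (the $y_k$ are equi-Lipschitz) I can pass to a non-relabelled subsequence with $y_k\to y$ uniformly on $\overline\O$ and $\nabla y_k\wtostar\nabla y$ in $L^\infty(\O;\R^{n\times n})$ for some $y\in W^{1,\infty}(\O;\R^n)$. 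The boundary condition survives because uniform convergence preserves the values on $\Gamma$, and the constraint $\|\nabla y\|_{L^\infty}\le\varrho$ survives because $\overline{B(0,\varrho)}$ is convex, so $\{G\in L^\infty(\O;\R^{n\times n}):\ G(x)\in\overline{B(0,\varrho)}\text{ a.e.}\}$ is weakly$^*$ closed.

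The crux is to show that the determinant constraint passes to the limit, i.e.\ $\det\nabla y\ge\varepsilon$ a.e., and this is the step I expect to be the main obstacle. Here I would invoke the weak continuity of minors: since $\{\nabla y_k\}$ is bounded in $L^\infty$, the classical results on null Lagrangians \cite{ball77,dacorogna} yield $\det\nabla y_k\wtostar\det\nabla y$ in $L^\infty(\O)$ (for $n=3$ passing also through $\cof\nabla y_k\wtostar\cof\nabla y$). The decisive observation is that although $\{F:\det F\ge\varepsilon\}$ is nonconvex as a set of matrices, the inequality $\det\nabla y_k\ge\varepsilon$ a.e.\ is a constraint on the \emph{scalar} functions $\det\nabla y_k$, and $\{g\in L^\infty(\O):\ g\ge\varepsilon\text{ a.e.}\}$ is convex and strongly closed, hence weakly$^*$ closed; thus the weak$^*$ limit satisfies $\det\nabla y\ge\varepsilon$ a.e. This is exactly where the weak continuity of Jacobians does the work that convexity did in Section~\ref{sec-relaxation}. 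Finally, lower semicontinuity of the energy follows from the classical fact that continuity and quasiconvexity of $W$ imply weak$^*$ lower semicontinuity of $y\mapsto\int_\O W(\nabla y)\,\md x$ on $W^{1,\infty}(\O;\R^n)$ (see e.g.\ \cite{dacorogna}), while $-\ell$ is weak$^*$ continuous for the volume and surface loads it represents; hence $J(y)\le\liminf_{k\to+\infty}J(y_k)=\inf J$, and since $y$ is admissible, it is a minimizer.
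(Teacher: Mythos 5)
Your proposal is correct and follows essentially the same route as the paper's proof: boundedness of the minimizing sequence from the ball constraint plus the Poincar\'e inequality, weak$^*$ continuity of the Jacobian (via \cite[Theorem~8.20]{dacorogna}) to pass the constraint $\det\nabla y\ge\varepsilon$ to the limit, the trace theorem for the boundary condition, and quasiconvexity for lower semicontinuity. Your explicit verification of nonemptiness via the arithmetic--geometric mean inequality on singular values is a slightly more detailed justification of the hypothesis $\varrho>\sqrt{n}\,\varepsilon^{1/n}$ than the paper's remark that the set contains $\varepsilon^{1/n}\mathrm{Id}$, but the argument is otherwise the same.
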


\begin{proof}
The set of deformations admissible for \eqref{functionalJ2} is nonempty. Indeed, $\mathcal{S}_\varepsilon\cap\overline{B(0,\varrho)}$ contains the diagonal matrix with nonzero entries equal to $\varepsilon^{1/n}$.   Thus, we can select $\{y_k\}_{k\in\N}\subset W^{1,\infty}(\O;\R^n)$ a minimizing sequence of $J$ admissible in \eqref{functionalJ2}, i.e., $J(y_k)\to\inf\, J$ for $k\to+\infty$. Since $\nabla y_k$ is constrained to the ball $\overline{B(0,\varrho)}$,  there is $C>0$ such that $\sup_{k\in\N}\|y_k\|_{W^{1,\infty}(\O;\R^n)}\le C$ by the Poincar\'{e} inequality. Of course, we also have that  $\det\nabla y_k\ge \varepsilon>0$ in $\O$. Therefore, there is a (non-relabeled) subsequence such that 
$y_k\wstar y$ in $W^{1,\infty}(\O;\R^n)$ as $k\to+\infty$ for some $y\in W^{1,\infty}(\O;\R^n)$. Moreover, as $\det\nabla y_k\wstar\det\nabla y$ in $L^\infty(\O)$ for $k\to+\infty$ (see e.g.~\cite[Theorem~8.20]{dacorogna}), we have that $\det\nabla y\ge  \varepsilon$; thus, $\nabla y(x) \in \mathcal{S}_\varepsilon \cap \overline{B(0,\varrho)}$ a.e.\ in $\Omega$. By the standard trace theorem, $y=y_0$ on $\Gamma$. Summing up, we see that $y$ is an admissible deformation in \eqref{functionalJ2}. Quasiconvexity of $W$ and linearity of $\ell$ imply that 
$J$ is  lower semicontinuous along weakly* converging sequences of Lipschitz maps. Therefore, 
$J(y)\le\liminf_{k\to+\infty}J(y_k)$  and, consequently, $y$ is a solution.
\end{proof}

\section{Gradient polyconvexity}\label{gradient-polyconvexity}
\SECOND

In Section \ref{lock-det} we examined the locking constraint $\det F \geq \varepsilon$. Recall that in order to prove the results obtained there we needed the energy density to be finite and quasiconvex on the whole space; however, in many physical applications this might not be an admissible option. Indeed, as already mentioned in the introduction, physical energy densities should blow-up as the Jacobian of the deformation approaches zero. One possibility to incorporate this restriction is to let the energy depend (on parts) of the second gradient of the deformation. The canonical way to do so is to let the energy density be a \emph{convex} function of the second gradient (cf., e.g., \cite{ball-crooks,ball-mora,mielke-roubicek,podio,Silh88PTNB}). 

Yet, here we propose a different approach inspired by the notion of polyconvexity due to Ball \cite{ball77}. Indeed, in three dimensions, we consider energies that depend on the \emph{gradient of the cofactor and the gradient of the determinant}; i.e.
$$
I(y) = \int_\Omega \hat W(\nabla y(x), \nabla[ \cof \nabla y(x)], \nabla[ \det \nabla y(x)]) \d x
$$
for any deformation $y$. We shall call such energy functionals \emph{gradient polyconvex} if the dependence of $\hat{W}$ on the last two variables is convex. In this case, assuming also suitable coercivity of the energy, we prove not only existence of minimizers to the functional $I$ but also that it automatically satisfies the constraint $\det F \geq \varepsilon$; so, in other words, it can be used beyond the limitations on the energy density from Section \ref{lock-det}.
Let us also note that, as we show in Example \ref{exa-grad-poly}, the deformation entering such an energy needs not to be a $W^{2,1}(\Omega; \R^3)$-function, i.e., needs not to have an integrable second gradient. 

\WE 
Let us remark at this point that the notion of gradient polyconvexity works so well since the cofactor and the determinant have a prominent position not just from the point of view of weak continuity (they are null-Lagrangians) but also from the physical point of view. Indeed, the cofactor describes the deformation of surfaces while the determinant describes the deformation of volumes. \MAY Here, and everywhere in this section, we will limit our scope to $n=3$ for better readability but  the results hold in every   dimension. \EOR We start the detailed discussion with a definition of gradient polyconvexity.

\FIRST

\begin{definition} \label{def-gpc}
Let \MAY Let $\O\subset\R^3$ be a bounded open domain. \EOR  Let $\hat{W}:\R^{3\times 3}\times\R^{3\times 3\times 3}\times\R^3\to\R\cup\{+\infty\}$ be a lower semicontinuous function. The functional
\begin{align}\label{full-I}
I(y) = \int_\Omega \hat W(\nabla y(x), \nabla[ \cof \nabla y(x)], \nabla[ \det \nabla y(x)]) \d x,
\end{align}
defined for any measurable function $y: \Omega \to \R^3$ for which the weak derivatives $\nabla y$, $\nabla[ \cof \nabla y]$, $\nabla[ \det \nabla y]$ exist and are integrable is called {\em gradient polyconvex} if the function $\hat{W}(F,\cdot,\cdot)$ is convex for every \MAY $F\in\R^{3\times 3}$. \EOR
\end{definition}


\begin{remark}
\label{rem-coercivity}
Let us note that Definition~\ref{def-gpc} includes the case in which $\hat{W}$ does not depend on some of its variables at all; in principle, a continuous $\hat{W}$ not depending on the gradients of the cofactor and the determinant  in any form would also meet the requirements of the definition. Nonetheless, we would not be able to prove existence of minimizers for the  corresponding  energy  functional $I$ since it would not satisfy suitable \emph{coercivity conditions}. Indeed, weak lower semicontinuity of integral functionals  defined in \eqref{energy-funct} 
is a crucial ingredient of proofs showing existence of minimizers. It relies on (generalized) convexity of the integrand  but  also on its  coercivity  conditions. Indeed, we refer to \cite[Example 3.6]{benesova-kruzik}  for a few examples of integral functionals as in \eqref{energy-funct} where $W$ is polyconvex but noncoercive and consequently $I$ is not weakly lower semicontinuous.
Therefore, we must also assume suitable  growth conditions for the energy density of the gradient polyconvex functional to be able to find minimizers by the direct method (see \cite{dacorogna}), see \eqref{growth-graddet1}.
\end{remark}

\EOR


By Definition~\ref{def-gpc}, the energy densities of gradient polyconvex functionals depend on the gradients of the determinant and of the cofactor. However, recall from \eqref{second-order} that, in the most standard setting in non-simple materials, the overall energy rather reads as $\int_\Omega w(\nabla y) + \gamma |\nabla^2 y|^d \mathrm{d} x$ for some $d \in [1, +\infty)$, $\gamma>0$, and a continuous function $w:\R^{3\times 3}\to [0,+\infty)$ representing the stored energy density. In other words, in the standard setting we can expect a deformation of finite energy to be contained at least in $W^{2,d}(\Omega;\R^3)$ while in our case we can only expect $y \in W^{1,p}(\Omega;\R^3)$, $\cof\nabla y \in W^{1,q}(\Omega; \R^{3 \times 3})$ and  $\det \nabla y \in W^{1,r}(\Omega)$.  Let us first realize that the former regularity implies the latter one with a proper choice of $d,p,r$, and $q$. Indeed, if $y\in W^{2,d}(\O;\R^3)$ then we have for $i,j,k,l,m\in\{1,2,3\}$  (Einstein's summation convention applies) by Cramer's rule \MAY $(\det F)\mathrm{Id} = (\cof F) F^T$ (with  ``$\mathrm{Id}$''  the identity matrix) \EOR  that
\begin{align}
\frac{\partial}{\partial x_i}\det\nabla y=(\cof\nabla y)_{jk}\frac{\partial^2 y_j}{\partial x_k\partial x_i}\  \quad \text{ and } \quad \frac{\partial}{\partial x_i}(\cof\nabla y)_{jk}=\mathcal{L}_{jklm}(\nabla y)\frac{\partial^2 y_l}{\partial x_m\partial x_i}\ 
\end{align}
where 
$\mathcal{L}_{jklm}(F):=\frac{\partial(\cof F)_{jk}}{\partial F_{lm}} $
is a linear (or zero) function in $F$.
Hence, we see that gradients of nonlinear minors are controlled by the first and the second gradient of the deformation.  

On the contrary, the other implication does not hold as the following example shows: 
\begin{example}
\label{exa-grad-poly}
Let us note that, requiring for a deformation $y:\Omega \to \R^3$ to satisfy $\det \nabla y \in W^{1,r}(\Omega)$ and $\cof\nabla y \in W^{1,q}(\Omega; \R^{3 \times 3})$ is a weaker requirement than $y \in W^{2,1}(\Omega; \R^{3})$ for any $r,q \geq 1$. To see this, let us take $\Omega = (0,1)^3$ and the following deformation for some  $t\ge 1$
$$
y(x_1,x_2,x_3):= \left(x_1^2, x_2\, x_1^{t/(t+1)}, x_3\,  x_1^2 \right) , $$
$$
\text{ so that } \nabla y(x_1,x_2,x_3) = \left( {\begin{array}{ccc}
 2 x_1 & 0 & 0 \\ 
 \frac{t}{t+1}x_2\,x_1^{-1/(t+1)} & x_1^{t/(t+1)} & 0 \\ 
 2\,x_1 \, x_3 & 0 & x_1^2
  \end{array} } \right).
$$
It follows that 
$$
\det \nabla y(x_1,x_2,x_3) = 2x_1^{(4t+3)/(t+1)}>0$$ and 
$$\cof \nabla y(x_1,x_2,x_3) = \left( {\begin{array}{ccc}
  x_1^{(3t+2)/(t+1)}& -\frac{t}{t+1}x_2\,x_1^{(2t+1)/(t+1)} & -2\, x_1^{(2t+1)/(t+1)}\, x_3  \\ 
0 & 2\, x_1^3 & 0 \\ 
 0& 0 & 2\,x_1^{(2t+1)/(t+1)}
  \end{array} } \right) .
$$
Notice that $\det\nabla y\in W^{1,\infty}(\O)$, $\cof\nabla y\in W^{1,\infty}(\O;\R^{3\times 3})$, $ (\det\nabla y)^{-1/(4t+3)}\in L^1(\O)$ but  we see that $\nabla^2 y\not\in L^1(\O;\R^{3\times 3\times 3})$ which means that $y\not\in W^{2,1}(\O;\R^3)$. On the other hand, $y\in W^{1,p}(\O;\R^3)\cap L^\infty(\O;\R^3)$ for every $1\le p<1+t$.

\begin{figure}[h]
\center
\begin{minipage}{0.45\textwidth}
\center
\hspace*{-2.8cm}\includegraphics[height=0.4\textheight]{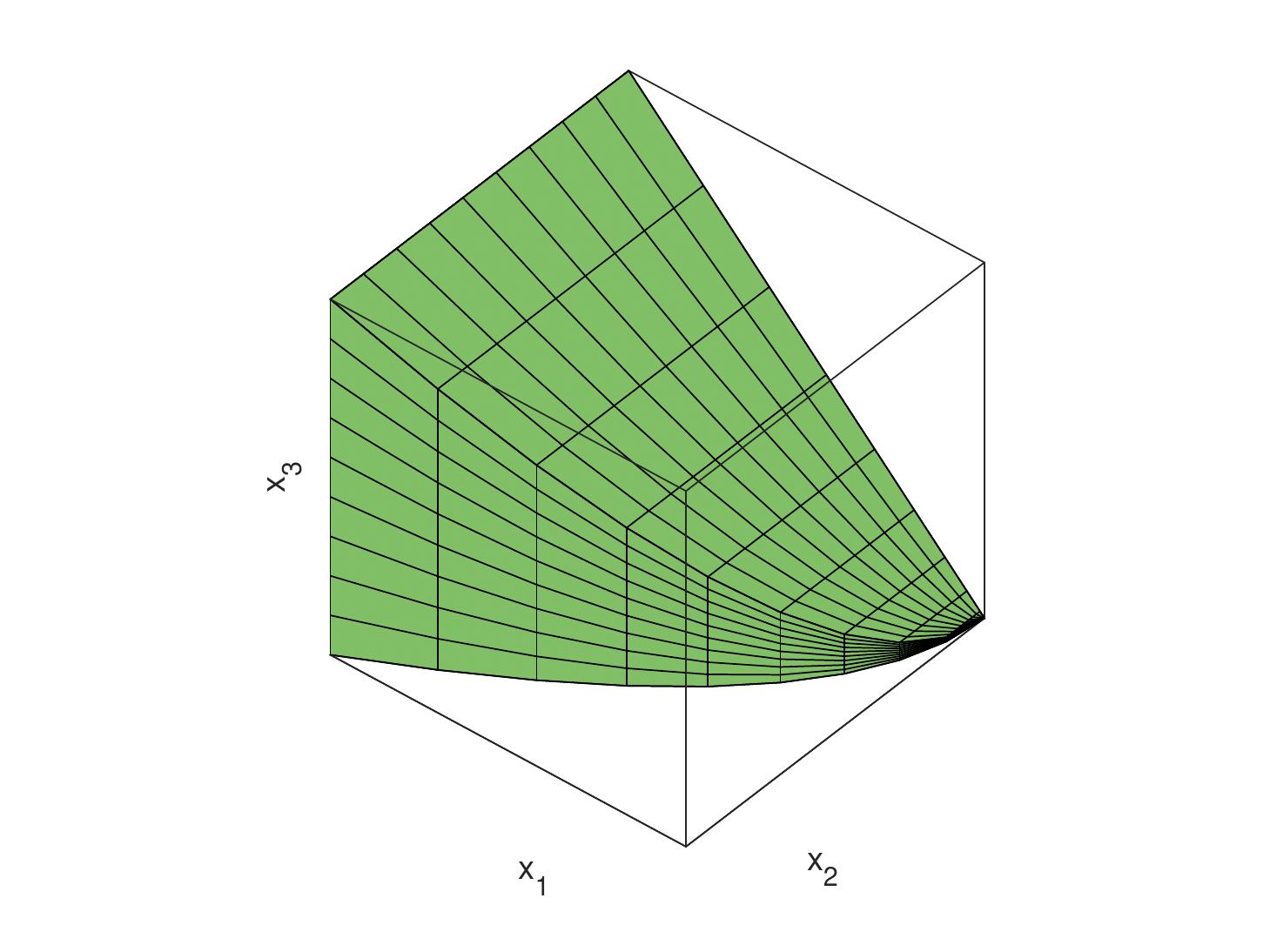}
\end{minipage}
\hspace*{-1.5cm}
\caption{Deformed cube (green)  in the frame of the reference domain  $(0,1)^3$ as in Example~\ref{exa-grad-poly} for $t=100$.}
\label{figure1}
\end{figure}

\end{example}

\WE
Therefore, the setting of gradient polyconvex materials is indeed more general \MAY than \EOR the standard approach used in non-simple materials involving the second gradient of the deformation. 
 \EOR

\begin{remark}
(i) If $n=2$ and $F\in\R^{2\times 2}$ then $\cof F$ has the same set of  entries as $F$ (up to the minus sign at off-diagonal entries), so that $J$ in \eqref{newfunctional} in fact depends on $\nabla^2 y$.\\
(ii) If $y:\O\to\R^3$ is smooth and  if $\nabla[\cof\nabla y]=0$,  almost everywhere in $\O \subset \R^3$ then for almost all $x\in\O$ we have $y(x)=Ax+b$ for some $A\in\R^{3\times 3}$ and $b\in\R^3$, and vice versa. \\
(iii) We recall that  $\det\nabla y$ and $\cof\nabla y$ measure   volume and area changes, respectively, between the reference  and the deformed configurations. Therefore, gradient polyconvexity ensures that these changes are not too  ``abrupt''.
\end{remark}

 The principle of frame indifference requires that $\hat W(\nabla y(x), \nabla [\cof \nabla y(x) ], \nabla[\det \nabla y(x)])$ equals $ \hat W(R\nabla y(x), \nabla [\cof R\nabla y(x)], \nabla[\det R\nabla y(x)])$ for every $y:\O\to\R^3$ as in Definition~\ref{def-gpc} and every $R\in SO(3)$. Since  $\nabla [\det R\nabla y]=\nabla [\det\nabla y]$ and $\nabla [\cof (R\nabla y)] = \nabla [\cof R\cof \nabla y] = \nabla [R \cof \nabla y] = R\nabla [\cof \nabla y]$, frame indifference translates to 
\begin{align}\label{pfi-2}
\hat W(F, \Delta_1,\Delta_2)=\hat W(RF, R\Delta_1,\Delta_2)
\end{align}
for every $F\in\R^{3\times 3}$, every $\Delta_1\in\R^{3\times3\times 3}$, every $\Delta_2\in\R^{3}$, and every proper rotation  $R\in{\rm SO}(3)$.
We recall that componentwise  $[R\Delta_1]_{ijk}:= \sum_{m=1}^3R_{im}[\Delta_1]_{mjk}$ for all $i,j,k\in\{1,2,3\}$.\\

\SECOND

Let us now turn to our existence theorems for gradient polyconvex energies. As already pointed out in Remark \ref{rem-coercivity}, we can do so, only when prescribing suitable coercivity conditions. We are going to assume, essentially, two types of growth conditions: In the first case, $\hat{W}$ does not actually depend on $\nabla[\det\nabla y]$, so that we assume that for some $c>0$, and finite numbers  $p,q,r,s\ge 1$ it holds that  
\begin{align}\label{growth-graddet1}
\hat W(F,\Delta_1)\ge\begin{cases}
c\big(|F|^p  +|\cof F|^q+(\det F)^r+ (\det F)^{-s}+|\Delta_1|^q \big)
&\text{ if }\det F>0,\\
+\infty&\text{ otherwise.} \end{cases}
\end{align}

Notice that even if the energy does not depend on $\nabla[\det\nabla y]$, we will be able to prove  not only existence of minimizers but also H\"older continuity  of the Jacobian. This is due to the fact that for every invertible  $F\in\R^{n\times n}$ 
\begin{align}\label{cofac}
\cof F:= (\det F) F^{-\top}\in\R^{n\times n} \ .\end{align}
and consequently 
\begin{align}\label{detcof-n}
\det\cof F= (\det F)^{n-1}\;
\end{align}
so that H\"older continuity of the cofactor also yields (local) H\"older continuity for the determinant. Notice also that if $F\in\R^{3\times 3}$ with $\det F>0$ then $F^{-\top}=\cof F/\sqrt{\det\cof F}$, so that $\cof F$ fully characterizes $F$.
\SECOND 
 Therefore, Proposition \ref{prop-grad-poly} is formulated in such a way that a locking constraint $L(\nabla y)\le 0$ is included. Setting $L:=0$ makes this condition void.

In the second case, we let $\hat{W}$ depend additionally on the gradient of the Jacobian and assume that  for some $c>0$, and finite numbers  $p,q,r,s\ge 1$ it holds that  
\begin{align}\label{growth-graddet2}
\hat W(F,\Delta_1,\Delta_2)\ge\begin{cases}
c\big(|F|^p  +|\cof F|^q+(\det F)^r+ (\det F)^{-s}+|\Delta_1|^q + |\Delta_2|^r\big)
&\text{ if }\det F>0,\\
+\infty&\text{ otherwise.} \end{cases}
\end{align}
This will allow us to broaden the parameter regime for $q$ in Proposition \ref{prop-grad-poly2} and still be able to prove existence of minimizers along with the locking constraint $\det F \geq \varepsilon$.

\EOR
In the following existence theorems, we consider gradient polyconvex functionals that allow also for a linear perturbation. Assume that $\ell$ is a  continuous and linear functional on deformations and that $I$ defined in \eqref{full-I} is  gradient polyconvex.
Define for $y:\O\to\R^3$ smooth enough the following  functional 
\WE
\begin{align}\label{newfunctional}
J(y):= I(y)-\ell (y)\, ,
\end{align} 
\EOR
where $I$ is as in Definition~\ref{def-gpc}. 

\WE
\begin{proposition}
\label{prop-grad-poly}
Let $\O\subset\R^3$ be a bounded Lipschitz domain, 
and let  $\Gamma=\Gamma_0\cup\Gamma_1$ be a measurable partition of $\Gamma=\partial\O$ with $\mathcal{H}(\Gamma_0)>0$. Let further $\ell:W^{1,p}(\O;\R^3)\to\R$ be a linear bounded functional and $J$ as in \eqref{newfunctional} with 
\begin{align}\label{part-I}
I(y):= \int_\Omega \hat{W}(\nabla y, \nabla [\mathrm{Cof}\nabla y]) \d x
\end{align}being gradient polyconvex and such that \eqref{growth-graddet1} holds true. Let $L:\R^{3\times 3}\to\R$ be lower semicontinuous.  Finally, let  $p\ge 2$, $q\ge\frac{p}{p-1}$, $r>1$,  $s>0$ and assume that for some given map $y_0\in W^{1,p}(\O;\R^3)$ the following set 
 \begin{align*}
\mathcal{A}:&=\{y\in W^{1,p}(\O;\R^3):\ \cof \nabla y\in W^{1,q}(\O;\R^{3\times 3}),\ \det \nabla y\in L^r(\O),\nonumber\\
& \qquad (\det\nabla y)^{-s}\in L^1(\O),\  \det\nabla y>0\mbox{ a.e.\ in $\Omega$},\, L(\nabla y)\le 0\mbox{ a.e.\ in $\Omega$},\ y=y_0\ \mbox{ on }  \Gamma_0\}
\end{align*}
is nonempty and that $\inf_{\mathcal{A}} J<+\infty$. Then the following holds:

\noindent (i) The functional  $J$  has a minimizer on $\mathcal{A}$, i.e., $\inf_{\mathcal{A}} J$ is attained.

\noindent (ii) Moreover, if $q>3$ and $s>6q/(q-3)$ then there is $\varepsilon>0$ such that for every minimizer  $\tilde y\in \mathcal{A}$  of $J$ it holds that $\det\nabla \tilde y\ge\varepsilon$ in $\bar\O$.
\end{proposition}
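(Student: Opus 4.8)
The plan is to prove both assertions by the direct method, so the heart of the argument is establishing weak lower semicontinuity of $J$ together with closedness of $\mathcal{A}$ under the appropriate weak convergence, and then extracting a pointwise determinant bound via a Sobolev embedding argument in the spirit of Healey and Kr\"omer.

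\textbf{Part (i): existence of a minimizer.}
First I would fix a minimizing sequence $\{y_k\}_{k\in\N}\subset\mathcal{A}$ with $J(y_k)\to\inf_{\mathcal{A}}J<+\infty$. The coercivity hypothesis \eqref{growth-graddet1} together with the boundary condition $y=y_0$ on $\Gamma_0$ (so that a generalized Poincar\'e inequality applies) gives uniform bounds: $\|y_k\|_{W^{1,p}}$, $\|\cof\nabla y_k\|_{W^{1,q}}$, $\|\det\nabla y_k\|_{L^r}$, and $\|(\det\nabla y_k)^{-s}\|_{L^1}$ are all bounded. Passing to a subsequence, I obtain weak limits: $y_k\wto y$ in $W^{1,p}$, $\cof\nabla y_k\wto G$ in $W^{1,q}$, and $\det\nabla y_k\wto \delta$ in $L^r$. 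The key step is to identify $G=\cof\nabla y$ and $\delta=\det\nabla y$. This follows from the weak continuity of minors: since $p\ge 2$ (so $\cof\nabla y_k$, a quadratic expression in $\nabla y_k$, is controlled) and using that the minors converge to the minors of the limit in the sense of distributions, one gets $G=\cof\nabla y$ a.e., and similarly $\delta=\det\nabla y$, exploiting the extra integrability of $\cof\nabla y_k$ to pass to the limit in $\det\nabla y=\tfrac{1}{3}(\cof\nabla y):\nabla y$ (or the analogous cofactor-times-gradient identity). Then I must check that $y\in\mathcal{A}$: the weak $W^{1,q}$ limit keeps $\cof\nabla y\in W^{1,q}$; by compact embedding ($q$ may be small but weak limits plus Mazur give closedness) and convexity, $(\det\nabla y)^{-s}\in L^1$ with $\det\nabla y>0$ a.e.\ follows since the map $t\mapsto t^{-s}$ is convex and lower semicontinuous on $(0,\infty)$ and $\{\det\nabla y_k>0\}$ passes to the limit; the constraint $L(\nabla y)\le 0$ is preserved because $L$ is lower semicontinuous and $\nabla y_k\to\nabla y$ in measure along a further subsequence. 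Finally, lower semicontinuity of $J$ follows from convexity of $\hat W(F,\cdot)$ in its second argument combined with the weak convergence of $\nabla[\cof\nabla y_k]$ and the strong (subsequential a.e.) convergence of $\nabla y_k$, invoking a standard lower semicontinuity theorem for integrands convex in the highest-order variable and continuous in the lower-order one; linearity and continuity of $\ell$ handle the remaining term. Hence $J(y)\le\liminf J(y_k)=\inf_{\mathcal{A}}J$, so $y$ is a minimizer.

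\textbf{Part (ii): the determinant bound.}
Here I would follow the Healey--Kr\"omer strategy. Assume $q>3$. Then $\cof\nabla y\in W^{1,q}(\O;\R^{3\times 3})$ embeds into $C^{0,1-3/q}(\overline\O)$, so $\cof\nabla\tilde y$ is H\"older continuous on $\overline\O$; by \eqref{detcof-n}, $\det\cof F=(\det F)^2$ in three dimensions, so $\det\nabla\tilde y=\sqrt{\det\cof\nabla\tilde y}$ is also H\"older continuous on $\overline\O$ (using $\det\nabla\tilde y>0$). The remaining task is to upgrade the integrability bound $(\det\nabla\tilde y)^{-s}\in L^1$ into a uniform positive pointwise lower bound. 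The main obstacle, and the crux of the proof, is precisely this: continuity of $\det\nabla\tilde y$ alone does not forbid it from touching zero somewhere. I would argue by contradiction in the style of a Lipschitz/H\"older interpolation. Suppose $\det\nabla\tilde y(x_0)$ is very small at some $x_0\in\overline\O$; the H\"older modulus (quantified by $\|\cof\nabla\tilde y\|_{W^{1,q}}$, which is controlled by the energy bound $J(\tilde y)\le\inf_{\mathcal A}J$) then forces $\det\nabla\tilde y$ to remain small on a ball $B(x_0,r)$ whose radius $r$ scales like a power of the value $\det\nabla\tilde y(x_0)$. On that ball $(\det\nabla\tilde y)^{-s}$ is correspondingly large, and integrating gives a lower bound on $\int_{B(x_0,r)}(\det\nabla\tilde y)^{-s}\,\md x$ that blows up as $\det\nabla\tilde y(x_0)\to 0$, contradicting the finite energy bound $\int_\O(\det\nabla\tilde y)^{-s}\,\md x\le C$, provided the exponents are tuned so that the blow-up beats the volume factor $r^3$. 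A direct computation with the H\"older exponent $1-3/q$ shows the correct threshold is exactly $s>6q/(q-3)$, which is why this hypothesis appears. This yields a uniform $\varepsilon>0$, depending only on the energy bound, the exponents, and the domain, with $\det\nabla\tilde y\ge\varepsilon$ on $\overline\O$.

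\textbf{Expected main difficulty.}
The delicate point in Part (i) is the joint identification of the weak limits of $\cof\nabla y_k$ and $\det\nabla y_k$ with the minors of the limiting deformation when $p$ and $q$ are at the lower end of the admissible range; one has to be careful that the quadratic/cubic nonlinearities are weakly continuous, which is exactly where the conditions $p\ge 2$ and $q\ge p/(p-1)$ are used to guarantee enough integrability to pass to the limit in the Jacobian and cofactor identities. The main obstacle overall, however, is the quantitative H\"older-to-pointwise argument in Part (ii): getting the scaling of the ball radius against the integrability exponent sharp enough to produce the threshold $s>6q/(q-3)$ is the genuinely technical step.
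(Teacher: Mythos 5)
Your overall architecture (direct method for (i), Healey--Kr\"omer-type contradiction for (ii)) is the paper's, and your Part (ii) is essentially correct: the H\"older continuity of $\cof\nabla \tilde y$ from $W^{1,q}\hookrightarrow C^{0,(q-3)/q}$, the identity $\det\cof F=(\det F)^2$ transferring this to $(\det\nabla\tilde y)^2$, and the balance of $t^3$ against $t^{-\alpha s/2}$ giving the threshold $s>6q/(q-3)$ all match the paper, and your remark that the H\"older constant is controlled by the energy bound is exactly what the paper uses to get a single $\varepsilon$ valid for all minimizers.

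There is, however, a genuine gap in Part (i). You repeatedly invoke the ``strong (subsequential a.e.) convergence of $\nabla y_k$'' --- once to preserve the constraint $L(\nabla y)\le 0$ (where only lower semicontinuity of $L$ is assumed, so pointwise convergence of the argument is indispensable) and once to apply a lower semicontinuity theorem for integrands that are merely lower semicontinuous, not convex or quasiconvex, in the variable $F$. But nothing in your argument produces this convergence: the minimizing sequence is bounded in $W^{1,p}$ only, there is no bound on $\nabla^2 y_k$ (Example~\ref{exa-grad-poly} of the paper shows admissible maps need not lie in $W^{2,1}$), so no compact embedding yields strong or a.e.\ convergence of the gradients themselves; weak convergence $\nabla y_k\wto\nabla y$ in $L^p$ is all you have a priori. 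This is precisely the crux of the paper's proof, and it is resolved by an algebraic detour through Cramer's rule: the compact embedding $W^{1,q}(\O;\R^{3\times 3})\hookrightarrow L^q(\O;\R^{3\times 3})$ gives $\cof\nabla y_k\to\cof\nabla y$ a.e.\ (along a subsequence), hence $\det\nabla y_k=\sqrt{\det\cof\nabla y_k}\to\det\nabla y$ a.e., and since $\det\nabla y>0$ a.e.\ (Fatou), one gets $(\nabla y_k)^{-1}=(\cof\nabla y_k)^\top/\det\nabla y_k\to(\nabla y)^{-1}$ a.e.; writing $\nabla y_k=\det\nabla y_k\,(\cof\nabla y_k)^{-\top}$ and using the pointwise bound $|\nabla y_k(x)|\le\frac32\det\nabla y_k(x)\,|(\nabla y_k(x))^{-1}|^2$ to extract convergent subsequences with a unique limit then yields $\nabla y_k\to\nabla y$ a.e., hence in measure by Egoroff. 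Without this step your proof of (i) does not close; with it, the rest of your outline (Fatou for $(\det\nabla y)^{-s}$, convexity of $\hat W(F,\cdot)$ in the highest-order variable via \cite[Corollary~7.9]{fonseca-leoni}, linearity of $\ell$) goes through as you describe.
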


Before continuing with the proof, we recall  that  the cofactor  of an invertible matrix $F\in\R^{3\times 3}$  consists of all nine  $2\times 2$ subdeterminants of $F$; cf.~\eqref{cofac}. If $A\in\R^{2\times 2}$ and $|A|$ denotes the Frobenius norm of $A$, then 
the Hadamard inequality implies 
\begin{align}\label{Hadamard}
|\det A|\le\frac{|A|^2}{2} .
\end{align}
Applying \eqref{Hadamard} to all nine  $2\times 2$ submatrices of $F\in\R^{3\times 3}$ we get  
\begin{align}\label{h-cof}
|\cof F|\le \frac{3}{2}|F|^2 .
\end{align}
Since 
\begin{align}\label{cof}
\cof F^{-1}=\frac{ F^\top}{\det F}=(\cof F)^{-1} ,
\end{align}
we have 
\begin{align}\label{inverse}
|(\cof F)^{-1}|\le \frac32|F^{-1}|^2 .
\end{align}

It follows from \eqref{detcof-n} that if $F\in\R^{3\times 3}$ is invertible then 
\begin{align}\label{detofcof}
\det\cof F =\det^{\!\!2} F=\det F^2\ , 
\end{align}
where $\det^{\!\! 2}F:=(\det F)^2$.
Finally, we recall (cf.~\cite[Proposition 2.32]{dacorogna}, for instance) that $F\mapsto\det F$ is  locally Lipschitz and  that there is $d>0$ such that for every $F_1,F_2\in\R^{3\times 3}$
\begin{align}\label{n-Lipschitz}
|\det F_1-\det F_2|\le d(1+|F_1|^2+|F_2|^2)|F_1-F_2|\ .
\end{align}

\begin{proof}
We first prove the claim (i).
 Let $\{y_k\}\subset\mathcal{A}$ be a minimizing sequence of $J$. We get that 
\begin{align}\label{bound} &\sup_{k\in\N}\big(\|y_k\|_{W^{1,p}(\O;\R^3)} +\|\cof\nabla y_k\|_{W^{1,q}(\O;\R^{3\times 3})}\nonumber\\
&+\|\det\nabla y_k\|_{L^r(\O)}+\|(\det\nabla y_k)^{-s}\|_{L^1(\O)}\big)<C , \end{align}
for some $C>0$ due to coercivity of $\hat W$, \eqref{growth-graddet1}, and Dirichlet boundary conditions on $\Gamma_0$.
Standard results on weak convergence of minors \cite[Theorems~7.6-1 and 7.7-1]{ciarlet} show that (for a non-relabeled) subsequence 
$y_k\wto y$ in $W^{1,p}(\O;\R^3)$, $\cof\nabla y_k\wto\cof\nabla y$ in $L^q(\O;\R^{3\times 3})$ and $\det\nabla y_k\wto\det\nabla y$ in $L^r(\O)$ for $k\to+\infty$. By weak sequential compactness of bounded sets in $W^{1,q}(\O;\R^{3\times 3})$  we also have that 
$\cof\nabla y_k\wto H$ in $W^{1,q}(\O;\R^{3\times 3})$ for some $H\in W^{1,q}(\O;\R^{3\times 3})$. In particular, $\cof\nabla y_k\to H$ in $L^q(\O;\R^{3\times 3})$. But this implies that $H=\cof\nabla y$.
Hence, there is a subsequence (not relabeled) such that 
$\cof\nabla y_k\to\cof\nabla y$ pointwise almost everywhere in $\O$ for $k\to+\infty$. Formula \eqref{detofcof} yields  that $\det\nabla y_k\to\det\nabla y$ 
pointwise almost everywhere in $\O$ for $k\to+\infty$, too.

We must show that $y\in\mathcal{A}$.
Since $\det\nabla y_k >0$ almost everywhere, we have $\det\nabla y\ge 0$ in the limit.  
Moreover, conditions \eqref{growth-graddet1}, \eqref{bound}, and the Fatou lemma imply that 
  $$
  +\infty>\liminf_{k\to+\infty} J(y_k)+\ell(y_k)\ge \liminf_{k\to+\infty}\int_\O\frac{1}{(\det\nabla y_k(x))^s}\,\md x\ge \int_\O\frac{1}{(\det\nabla y(x))^s}\,\md x , $$
hence, inevitably,  $\det\nabla y>0$ almost everywhere in $\O$ and $(\det\nabla y)^{-s}\in L^1(\O)$. Finally, the continuity of the trace operator shows that $y\in\mathcal{A}$.

By \eqref{cofac} we have
$$(\nabla y_k(x))^{-1}=\frac{(\cof\nabla y_k(x))^\top}{\det\nabla y_k(x)} $$
and thus,  for almost all $x\in\O$
\begin{align}\label{inv-conv}(\nabla y_k(x))^{-1}\longrightarrow (\nabla y(x))^{-1}.\end{align}
Notice that, due to \eqref{inverse}, for almost all $x\in\O$
$$|\nabla y_k(x)|= \det\nabla y_k(x)|(\cof(\nabla y_k(x))^{-\top}|\le \frac32\det\nabla y_k(x)|(\nabla y_k(x))^{-1}|^2<C(x)\, $$
for some $C(x)>0$ independent of $k\in\N$, i.e., we may select a ($x$-dependent) convergent subsequence of $\{\nabla y_k(x)\}_{k \in \N}$ called  $\{\nabla y_{k_m}(x)\}_{m \in \N}$.
Moreover, we have due to \eqref{inverse}  for the fixed $x\in\O$ and $m\to+\infty$
\begin{align*}
\nabla y_{k_m}(x)&=\det\nabla y_{k_m}(x)(\cof(\nabla y_{k_m}(x))^{-\top}
\longrightarrow \det\nabla y(x) (\cof(\nabla y(x))^{-\top}=\nabla y(x),
\end{align*}
Now, as the limit is the same for all subsequences of $\{\nabla y_k(x)\}_{k\in\N}$, namely $\nabla y(x)$, we get that the whole sequence converges pointwise almost everywhere. This also shows that for almost every $x\in\O$ we have $L(\nabla y(x))\le\liminf_{k\to+\infty}L(\nabla y_k(x))\le 0$ because $L$ is lower semicontinuous. Hence, $y\in\mathcal{A}$.
As the Lebesgue measure of $\O$ is finite, we get by the Egoroff theorem  that $\nabla y_k\to\nabla y$ in measure, see e.g.\ \cite[Theorem~2.22]{fonseca-leoni}.

 Due to continuity and nonnegativity of $\hat W$ and due to convexity of $\hat W(F,\cdot)$  we have by \cite[Corollary~7.9]{fonseca-leoni} that 
 \begin{align}\label{wlsc} \int_\O \hat W(\nabla y(x),\nabla[\cof\nabla y(x)])\,\md x
 \le\liminf_{k\to+\infty}\int_\O \hat W(\nabla y_k(x),\nabla[\cof\nabla y_k(x)])\,\md x .
\end{align}
To pass to the limit in the functional $\ell$, we exploit its linearity.  Hence, $J$ is weakly lower semicontinuous along $\{y_k\} \subset \mathcal{A}$ and   $y\in\mathcal{A}$ is  a minimizer of $J$. This proves (i).

Let us prove (ii).
If $q>3$,
the Sobolev embedding theorem implies that $\cof\nabla y\in C^{0,\alpha}(\bar\O)$, where $\alpha= (q-3)/q< 1$.  We first show  that $\det\nabla y>0$ on $\partial\Omega$. Assume that there is $x_0\in\partial\Omega$ such that $\det\nabla y(x_0)=0$. We can, without loss of generality, assume that $x_0:=0$ and estimate for  all $x\in\bar\O$  
\begin{align*}
0\le|\cof\nabla y(x)-\cof\nabla y(0)|\le  C|x|^\alpha ,
\end{align*}
where  $C>0$. 
Taking into account \eqref{detofcof}, \eqref{n-Lipschitz}, and the fact that $\cof \nabla y$ is uniformly bounded on $\Omega$, we have for some $K,\tilde d>0$ that 
\begin{align}
0\le \det^{\!\!2}\nabla y(x)=|\det\cof\nabla y(x)-\det\cof\nabla y(0)|\le \tilde d|\cof\nabla y(x)-\cof\nabla y(0)|\le K|x|^\alpha\ .
\end{align}
 Altogether, we see that if $|x|\le t$ then 
$$
\frac{1}{(\det^{\!\!2}\nabla y(x))^{s/2}}\ge \frac{1}{ K^{s/2}t^{\alpha s/2}}\ .$$ 
We have for $t>0$ small enough
\begin{align}\label{HK-inequality}
\int_\O\frac{1}{(\det\nabla y(x))^s}\,\md x\ge \int_{B(0,t)\cap\O}\frac{1}{(\det^{\!\!2}\nabla y(x))^{s/2}}\,\md x\ge \frac{ \hat K t^3}{ K^{s/2}t^{\alpha s/2}}\ ,
\end{align}
where $\hat K t^3\le \mathcal{L}^3(B(0,t)\cap\O)$ and $\hat K>0$ is independent of $t$ for $t$ small enough.  Note that this is possible because $\O$ is Lipschitz and therefore it has the cone property.
Passing to the limit for $t\to 0$ in \eqref{HK-inequality} we see that $\int_\O(\det\nabla y(x))^{-s}\,\md x =+\infty$ because  $\alpha s/2=((q-3)/q)s/2>3$. This, however, contradicts our assumptions and, consequently, 
$\det\nabla y>0$ everywhere in $\partial\O$.  If we assume that there is  $x_0\in\O$ such that $\det\nabla y(x_0)=0$ the same reasoning brings us again to a contradiction. It is even easier because $B(x_0,t)\subset\O$ for $t>0$ small enough. 
As $\bar\O$ is compact and 
$x\mapsto\det\nabla y(x)$ is continuous it is clear that there is $\varepsilon>0$ such that  $\det\nabla y\ge\varepsilon$ in $\bar\O$. This finishes the proof of (ii) if there is a finite number of minimizers.

Assume then that there are infinitely many minimizers of $J$ and assume that such $\varepsilon>0$ does not exist, i.e., that for every $k\in\N$ there existed $y_k\in\mathcal{A}$ such that $J(y_k)=\inf_{\mathcal{A}}J$  and $x_k\in\bar\O$ such that $\det^{\!\!2}\nabla y_k(x_k)<1/k$. \EOR \SECOND We can  even assume that $x_k\in\O$ because of the continuity of $x\mapsto\det^{\!\!2}\nabla y_k(x)$. \EOR   \WE
The uniform (in $k$)  H\"{o}lder continuity of $x\mapsto\det^{\!\!2}\nabla y_k(x)$ which comes from \eqref{growth-graddet1} and from the fact that $J(y_k)=\inf_{\mathcal{A}}J$ implies that $|\det^{\!\!2}\nabla y_k(x)-\det^{\!\!2}\nabla y_k(x_k)|\le K|x-x_k|^\alpha$ (with $K$ independent of $k$)  and therefore 
$0\le\det^{\!\!2}\nabla y_k(x)\le 1/k+K|x-x_k|^\alpha$. Take $r_k>0$ so small that $B(x_k,r_k)\subset\O$ for all $k\in\N$. 
Then 
$$\inf_{\mathcal{A}}J\ge \int_\O(\det^{\!\!2}\nabla y_k(x))^{-s/2}\,\md x\ge \int_{B(x_k,r_k)}(\det^{\!\!2}\nabla y_k(x))^{-s/2}\,\md x\ge \int_{B(x_k,r_k)}\left(\frac{k}{1+Kkr_k^\alpha}\right)^{s/2}\,\md x . $$
The right-hand side is, however, arbitrarily large for $k$ suitably large and $r_k$ suitably small because $\alpha s>6$. This contradicts $\inf_{\mathcal{A}}J<+\infty$. The proof of (ii) is finished.

\end{proof}

\begin{remark}
 A claim  analogous to (ii)  has already been proved by Healey and Kr\"omer \cite{healey-kroemer}, who showed it for deformations in $W^{2,p}(\Omega; \R^3)$ with $p$ large enough so that the determinant is H\"older continuous. We can ensure the latter even for gradient polyconvex materials, i.e., even though the  integrability of second derivatives of the deformation is not guaranteed.
\end{remark}

\begin{remark}
\noindent 
(i) Using \eqref{detofcof}, \eqref{n-Lipschitz}, and \cite[Thm.~1]{marcus-mizel} we obtain that under the assumptions of Proposition~\ref{prop-grad-poly} every $y\in\mathcal{A}$ satisfies
$\det^{\!\!2}\nabla y=\det\cof\nabla y\in W^{1,r}(\O)$ where 
$$
r:=
\begin{cases}
3q/(9-2q) & \text{ if $9/5\le q<3$,}\\
q & \text{ if $q>3$.}
\end{cases}
$$
One cannot, however, infer any Sobolev regularity of $\det\nabla y$ from this result because $a\mapsto\sqrt{a}$ is not locally Lipschitz on nonnegative reals; cf.~\cite[Thm.~1]{marcus-mizel} for details.\\

\WE

\noindent
(ii) Having $y\in W^{2,p}(\O;\R^3)$ we get from  \cite[Thm.~1]{marcus-mizel} that $\cof\nabla y\in W^{1,q}(\O;\R^{3\times 3})$ where 

$$
q:=
\begin{cases}
3p/(6-p) & \text{ if $3/2\le p< 3$,}\\
p & \text{ if $p>3$.}
\end{cases}
$$

\end{remark}

\EOR

\begin{remark}
(i) The nonemptiness of $\mathcal{A}$ must be explicitly assumed because a precise characterization of the set of traces of Sobolev maps with positive determinant almost everywhere in $\O$ is not known. Thus, given a boundary datum $y_0$ there is no more explicit way to assure that it is compatible with deformations of finite energy than the assumption that $\mathcal{A}$ is non-empty. 
We also refer to \cite[Section~7]{benesova-kruzik-wlsc} for more details on this topic.

\noindent 
(ii) Let us point out that convexity of $\hat W$  in its first component is not required in the existence result in Proposition \ref{prop-grad-poly}. This is, on one hand, not surprising since the energy depends (in parts) also on the second gradient. On the other hand, we know that boundedness of the second gradient may not be assured so that no compact embedding can be used to see that the minimizing sequence actually converges strongly in $W^{1,p}(\Omega;\R^3)$ to pass to the limit in the terms depending on $\nabla y(x)$. Actually our proof relies only on pointwise convergence that can be deduced from Cramer's rule  for the  matrix inverse, cf.\ \eqref{cofac}, \WE so that it combines analytical and  algebraic results.\EOR
\end{remark}

\SECOND

The technique of the proof of Proposition \ref{prop-grad-poly} can actually be used  to show the following strong compactness result which might be of an independent interest. Different variants of the proposition are certainly  available, too. 

\begin{proposition}\label{compactness}
Let $\O\subset\R^3$, be a Lipschitz bounded domain and let $\{y_k\}_{k\in\N}\subset W^{1,p}(\O;\R^3)$ for  $p>3$ be such that for some $s>0$
\begin{align}\label{sup}
\sup_{k\in\N}\,\left(\|y_k\|_{W^{1,p}(\O;\R^3)}+\|\cof \nabla y_k\|_{\mathrm{BV}(\O;\R^{3\times 3})}+\||\det \nabla y_k|^{-s}\|_{L^1(\O)}\right)<+\infty .
\end{align}
Then there is a (nonrelabeled) subsequence and $y\in W^{1,p}(\O;\R^3)$ such that for $k\to+\infty$ we have the following convergence results:
$y_k\to y$ in $W^{1,d}(\O;\R^3)$  for every $1\le d<p$, $\det\nabla y_k\to\det\nabla y$ in $L^{r}(\O)$ for every $1\le r<p/3$,  $\cof\nabla y_k\to\cof\nabla y$ in $L^q(\O;\R^{3\times 3})$ for every $1\le q< p/2$, and $|\det\nabla y_k|^{-t}\to |\det\nabla y|^{-t}$  in $L^1(\O)$ for every  $0\le t<s$. Moreover, if $s>3$ then $(\nabla y_k)^{-1}\to (\nabla y)^{-1}$ in $L^{\alpha}(\O;\R^{3\times 3})$ for every  $1\le\alpha< 3s/(3s+3-s)$.  
\end{proposition}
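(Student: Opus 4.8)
The plan is to reproduce the algebraic compactness mechanism from the proof of Proposition~\ref{prop-grad-poly} to extract, along a subsequence, \emph{pointwise a.e.} convergence of $\nabla y_k$, $\cof\nabla y_k$ and $\det\nabla y_k$, and then to convert each a.e.\ convergence into the claimed strong $L^\beta$-convergence by a uniform integrability (Vitali) argument matching the integrability exponents recorded in \eqref{sup}. First I would pass to weak limits: the $W^{1,p}$-bound gives $y\in W^{1,p}(\O;\R^3)$ with $y_k\wto y$ in $W^{1,p}$, and since $p>3$ the embedding $W^{1,p}(\O;\R^3)\hookrightarrow\hookrightarrow C(\bar\O;\R^3)$ yields $y_k\to y$ uniformly on $\bar\O$. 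Weak continuity of minors \cite[Thm.~7.6-1]{ciarlet} identifies the weak limits $\cof\nabla y_k\wto\cof\nabla y$ in $L^{p/2}$ and $\det\nabla y_k\wto\det\nabla y$ in $L^{p/3}$. Because $\{\cof\nabla y_k\}$ is bounded in $\mathrm{BV}(\O;\R^{3\times3})$, the compact embedding $\mathrm{BV}(\O)\hookrightarrow\hookrightarrow L^1(\O)$ gives, along a further subsequence, strong $L^1$- and hence a.e.\ convergence; the limit must equal the weak limit, so $\cof\nabla y_k\to\cof\nabla y$ a.e. As in Proposition~\ref{prop-grad-poly}, identity \eqref{detofcof} then forces $(\det\nabla y_k)^2\to(\det\nabla y)^2$ and thus $\det\nabla y_k\to\det\nabla y$ a.e., while Fatou's lemma applied to the $L^1$-bounded family $|\det\nabla y_k|^{-s}$ yields $\det\nabla y\neq0$ a.e.\ and $|\det\nabla y|^{-s}\in L^1(\O)$.

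The crucial and most delicate step is the a.e.\ convergence of the full gradient, which cannot be read off from the merely weak $W^{1,p}$-convergence. Here I would argue exactly as in \eqref{inv-conv} and the lines following it: since $\det\nabla y>0$ a.e., the matrix $\cof\nabla y$ is invertible a.e., so Cramer's rule \eqref{cofac} written as $\nabla y_k=\det\nabla y_k\,(\cof\nabla y_k)^{-\top}$ lets me pass to the a.e.\ limit using the already-established a.e.\ convergence of $\cof\nabla y_k$ and $\det\nabla y_k$, giving $(\nabla y_k)^{-1}\to(\nabla y)^{-1}$ and $\nabla y_k\to\nabla y$ a.e.\ in $\O$. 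The boundedness of $|\det\nabla y_k|^{-s}$ is what keeps the cofactor invertible in the limit and hence makes this reconstruction legitimate.

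With these a.e.\ convergences in hand, each strong convergence follows from Vitali's theorem, since on the finite-measure domain $\O$ an a.e.-convergent sequence bounded in $L^{\beta_0}$ converges in $L^\beta$ for every $1\le\beta<\beta_0$. The $L^p$-bound on $\nabla y_k$ gives $\nabla y_k\to\nabla y$ in $L^d$ for $d<p$, which together with the uniform convergence of $y_k$ yields $y_k\to y$ in $W^{1,d}(\O;\R^3)$; the estimate $|\cof\nabla y_k|\le\tfrac{3}{2}|\nabla y_k|^2$ from \eqref{h-cof} provides a uniform $L^{p/2}$-bound and hence $\cof\nabla y_k\to\cof\nabla y$ in $L^q$ for $q<p/2$; the cubic bound $|\det\nabla y_k|\le C|\nabla y_k|^3$ gives a uniform $L^{p/3}$-bound and $\det\nabla y_k\to\det\nabla y$ in $L^r$ for $r<p/3$; and the $L^1$-bound on $|\det\nabla y_k|^{-s}$ is a uniform $L^{s/t}$-bound on $|\det\nabla y_k|^{-t}$ for every $t<s$, giving $|\det\nabla y_k|^{-t}\to|\det\nabla y|^{-t}$ in $L^1(\O)$.

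Finally, for the inverse gradient under the extra hypothesis $s>3$, I already have $(\nabla y_k)^{-1}\to(\nabla y)^{-1}$ a.e.; it remains to produce a uniform $L^{\alpha_0}$-bound with $\alpha_0=3s/(2s+3)$ so that Vitali applies for all $1\le\alpha<\alpha_0$. Writing $|(\nabla y_k)^{-1}|^{\alpha}\le C\,|\cof\nabla y_k|^{\alpha}\,|\det\nabla y_k|^{-\alpha}$ via \eqref{cofac} and applying Hölder's inequality with $|\cof\nabla y_k|$ controlled in $L^{3/2}$ (the embedding $\mathrm{BV}(\O)\hookrightarrow L^{3/2}(\O)$) and $|\det\nabla y_k|^{-1}$ controlled in $L^{s}$, the conjugate-exponent bookkeeping requires $\alpha(\tfrac{2}{3}+\tfrac{1}{s})\le1$, i.e.\ $\alpha\le 3s/(2s+3)=\alpha_0$; the hypothesis $s>3$ is precisely what makes $\alpha_0>1$, so the range is nonempty. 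The main obstacle throughout is the second step, namely deducing a.e.\ convergence of $\nabla y_k$ from compactness of the cofactor alone through the algebraic identities \eqref{cofac}--\eqref{detofcof}; once that is secured, the remaining assertions are only a matter of feeding the correct integrability exponents from \eqref{sup} into Vitali's theorem, the single quantitatively sharp point being the Hölder exponent $\alpha_0$ for the inverse gradient.
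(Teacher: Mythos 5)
Your argument is correct and follows essentially the same route as the paper's proof: BV-compactness of the cofactors gives a.e.\ convergence, the identity $\det\cof F=(\det F)^2$ together with Fatou's lemma transfers this to the determinant and keeps it nonzero, Cramer's rule reconstructs the a.e.\ convergence of $\nabla y_k$ and $(\nabla y_k)^{-1}$, and Vitali's theorem with the exponents from \eqref{sup} (including the H\"older pairing $L^{3/2}$ for the cofactor against $L^s$ for $|\det\nabla y_k|^{-1}$, yielding $\alpha<3s/(2s+3)$) delivers all the strong convergences. Only a cosmetic slip: at one point you write $\det\nabla y>0$ a.e.\ where, consistent with the hypothesis $\||\det\nabla y_k|^{-s}\|_{L^1}$ bounded, you only have and only need $\det\nabla y\neq0$ a.e.
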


In the above proposition, $\mathrm{BV}(\O;\R^{3\times 3})$ stands for the space of functions of  bounded variations.

\begin{proof}
Reflexivity of $W^{1,p}(\O;\R^3)$ and \eqref{sup} imply the existence of a subsequence of $\{y_k\}$ (which we do not relabel) such that $y_k\wto y$ in  $W^{1,p}(\O;\R^3)$. Moreover, by the compact embedding of $\mathrm{BV}(\O)$ to $L^1(\O)$, we extract a further subsequence satisfying 
$\cof\nabla y_k\to H$ in $L^1(\O;\R^{3\times 3})$. Weak continuity of $y\mapsto\det\nabla y :W^{1,p}(\O;\R^3)\to L^{p/3}(\O)$ and of  $y\mapsto\cof\nabla y :W^{1,p}(\O;\R^3)\to L^{p/2}(\O;\R^{3 \times 3})$ \cite{dacorogna} implies that $H=\cof\nabla y$.  The strong convergence in $L^1$ yields that we can extract a further subsequence ensuring $\cof\nabla y_k \to\cof\nabla y$ a.e.~in $\O$ and in view of \eqref{detcof-n} also 
$\det\nabla y_k\to\det\nabla y$ a.e.~ in $\O$. By the Fatou lemma and the assumption \eqref{sup}
$$
+\infty >\liminf_{k\to+\infty}\int_\O\frac1{|\det\nabla y_k(x)|^s}\,\md x\ge \int_\O\frac1{|\det\nabla y(x)|^s}\,\md x ,$$
which shows that $\det\nabla y\ne 0$ almost everywhere in $\O$. Reasoning analogously to the one in the proof of Proposition~\ref{prop-grad-poly} results in 
the almost everywhere convergence $\nabla y_k\to\nabla y$ for $k\to+\infty$. The  Vitali convergence theorem (see e.g.~\cite[Theorem~2.24]{fonseca-leoni}) then implies that $\nabla y_k\to\nabla y$ in $L^d(\O;\R^{3\times 3})$ for every $1\le d<p$. This shows the strong convergence of 
$y_k\to y$ in $W^{1,d}(\O;\R^3)$. The same argument gives the other  strong convergences of the determinant and the cofactor. The standard embedding  result \cite{ambrosio} implies that $\sup_{k\in\N}\|\cof\nabla y_k\|_{L^{3/2}(\O;\R^{3\times 3})}<+\infty$. As $\{(\det\nabla y_k)^{-1}\}_{k\in\N}\subset L^s(\O)$ is uniformly bounded, too, the H\"{o}lder inequality and the Vitali convergence theorem imply the strong convergence of $(\nabla y_k)^{-1}\to(\nabla y)^{-1}$ in $L^{\alpha}(\O;\R^{3\times 3})$.

\end{proof}

\WE
Let us finally mention that Proposition~\ref{prop-grad-poly}  can be easily generalized to integral functionals 
\begin{align}\label{nf}
I(y):=\int_\O\mathbb{W}(x, y(x),\nabla y(x),(\nabla y(x))^{-1}, \nabla[\cof\nabla y(x)])\,\md x\  
\end{align}
where $$\mathbb{W}:\O\times\R^3\times\R^{3\times 3}\times\R^{3\times 3}\times\R^{3\times 3\times 3}\to \R$$
is  such that $\mathbb{W}$ is a normal integrand (i.e., measurable in $x\in\O$ if the other variables are fixed and 
lower semicontinuous in the other variables if almost every $x\in\O$ is fixed)  
and $\mathbb{W}(x,\hat y, F,F^{-1},\cdot):\R^{3\times 3\times 3}\times\R^{3}\to [0,+\infty]$ is convex for almost every $x\in\O$, every $\hat y\in\R^3$, and every $F\in\R^{3\times 3}$. 
If, further,
for some $c>0$, $s>0$, $p,q,r\ge 1$, and $g\in L^1(\O)$ it holds that for almost all $x\in\O$ and all $\tilde y\in\R^3$, $F\in\R^{3\times 3}$, and $\Delta_1\in\R^{3\times 3\times 3}$  
\begin{align}\label{growth-graddet3}
\mathbb{W}(x,\hat y, F,F^{-1},\Delta_1)\ge\begin{cases}
c\big(|F|^p  +|\cof F|^q+(\det F)^r+ (\det F)^{-s}+|\Delta_1|^q \big)-g(x)
&\text{ if }\det F>0,\\
+\infty&\text{ otherwise} \end{cases}
\end{align}
then we have the following result, which can be phrased not only for the physically most interesting case $n=3$, but for arbitrary dimensions $n\geq 2$. 

\begin{proposition}
\label{prop-grad-poly3}
Let $\O\subset\R^3$ be a bounded Lipschitz domain, 
and let  $\Gamma=\Gamma_0\cup\Gamma_1$ be a measurable partition of $\Gamma=\partial\O$ with $\mathcal{H}^2(\Gamma_0)>0$.  Let $I$ be as in \eqref{nf} and such that \eqref{growth-graddet3} holds true with some $g\in L^1(\O)$. Let $L:\R^{3\times 3}\to\R$ be lower semicontinuous.  Finally, let  $p\ge 2$, $q\ge\frac{p}{p-1}$, $r>1$,  $s>0$ and assume that for some given map $y_0\in W^{1,p}(\O;\R^3)$ the following set 
 \begin{align*}
\mathcal{A}:&=\{y\in W^{1,p}(\O;\R^3):\ \cof \nabla y\in W^{1,q}(\O;\R^{3\times 3}),\ \det \nabla y\in L^r(\O),\nonumber\\
& \qquad (\det\nabla y)^{-s}\in L^1(\O),\  \det\nabla y>0\mbox{ a.e.\ in $\Omega$},\ \, L(\nabla y)\le 0\mbox{ a.e.\ in $\Omega$},\ y=y_0\ \mbox{ on }  \Gamma_0\}
\end{align*}
is nonempty and that  $\inf_{\mathcal{A}} I<+\infty$. Then the following holds:

\noindent (i) the functional  $I$  has a minimizer on $\mathcal{A}$.

\noindent (ii)  Moreover, if $q>3$ and $s>6q/(q-3)$ then there is $\varepsilon>0$ such that for every minimizer  $\tilde y\in \mathcal{A}$  of $I$ it holds that $\det\nabla \tilde y\ge\varepsilon$ in $\bar\O$.
\end{proposition}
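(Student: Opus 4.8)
The plan is to follow the proof of Proposition~\ref{prop-grad-poly} almost verbatim, upgrading only the lower semicontinuity step to accommodate the measurable $x$-dependence of the integrand and its dependence on $y$ and on $(\nabla y)^{-1}$. First I would take a minimizing sequence $\{y_k\}\subset\mathcal{A}$; since $\inf_{\mathcal{A}}I<+\infty$ and $g\in L^1(\O)$ is a fixed summable function, the coercivity assumption \eqref{growth-graddet3} yields, exactly as in the bound \eqref{bound}, uniform estimates on $\|y_k\|_{W^{1,p}(\O;\R^3)}$, $\|\cof\nabla y_k\|_{W^{1,q}(\O;\R^{3\times3})}$, $\|\det\nabla y_k\|_{L^r(\O)}$ and $\|(\det\nabla y_k)^{-s}\|_{L^1(\O)}$. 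Passing to a (non-relabelled) subsequence I then obtain $y_k\wto y$ in $W^{1,p}$, $\cof\nabla y_k\wto\cof\nabla y$ in $W^{1,q}$, and $\det\nabla y_k\wto\det\nabla y$ in $L^r$, invoking the weak continuity of minors.

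Next I would recover the pointwise information needed to pass to the limit. The compact embedding $W^{1,q}(\O)\hookrightarrow\hookrightarrow L^q(\O)$, together with the weak continuity of minors identifying the $L^q$-limit, gives $\cof\nabla y_k\to\cof\nabla y$ a.e.\ (along a further subsequence); hence $\det\nabla y_k\to\det\nabla y$ a.e.\ by \eqref{detofcof}, while the Rellich embedding $W^{1,p}\hookrightarrow\hookrightarrow L^p$ gives $y_k\to y$ a.e. Fatou's lemma applied to $(\det\nabla y_k)^{-s}$ then forces $\det\nabla y>0$ a.e.\ and $(\det\nabla y)^{-s}\in L^1(\O)$. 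Cramer's rule \eqref{cofac}, in the form $(\nabla y_k)^{-1}=(\cof\nabla y_k)^\top/\det\nabla y_k$, yields $(\nabla y_k)^{-1}\to(\nabla y)^{-1}$ a.e., and then $\nabla y_k=\det\nabla y_k\,(\cof\nabla y_k)^{-\top}\to\nabla y$ a.e., precisely as in Proposition~\ref{prop-grad-poly}. Lower semicontinuity of $L$ gives $L(\nabla y)\le0$ a.e.\ and the trace theorem gives $y=y_0$ on $\Gamma_0$, so that $y\in\mathcal{A}$.

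The one genuinely new step is the lower semicontinuity of $I$ itself, which I expect to be the main obstacle. I would group the arguments converging a.e., namely $(x,y_k,\nabla y_k,(\nabla y_k)^{-1})$, and single out $\nabla[\cof\nabla y_k]$, which converges only weakly in $L^q(\O;\R^{3\times3\times3})$. Since $\mathbb{W}$ is a normal integrand, is bounded below by $-g$ with $g\in L^1(\O)$, and $\mathbb{W}(x,\hat y,F,F^{-1},\cdot)$ is convex, the inequality $I(y)\le\liminf_{k}I(y_k)$ follows from an Ioffe-type lower semicontinuity theorem for normal integrands that are convex in the variable converging weakly and lower semicontinuous in the variables converging in measure (on the finite-measure domain $\O$, a.e.\ convergence implies convergence in measure); see, e.g.,~\cite{fonseca-leoni}. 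This replaces the appeal to \cite[Corollary~7.9]{fonseca-leoni}, which covered only continuous nonnegative integrands: it is exactly the measurability in $x$ and the mere lower semicontinuity of $\mathbb{W}$ in the remaining variables that force the normal-integrand version. Combined with $y\in\mathcal{A}$, this proves (i).

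Finally, part (ii) carries over verbatim from Proposition~\ref{prop-grad-poly}(ii): the extra dependence of $\mathbb{W}$ on $x$, $\hat y$ and $(\nabla y)^{-1}$, as well as the $-g$ term, are irrelevant here because the argument uses only the coercive lower bound on $(\det\nabla y)^{-s}$. For $q>3$ the embedding $W^{1,q}(\O)\hookrightarrow C^{0,\alpha}(\bar\O)$ with $\alpha=(q-3)/q$ makes $\cof\nabla y$ H\"older continuous; assuming $\det\nabla y(x_0)=0$ at some $x_0\in\bar\O$ and combining \eqref{detofcof} with the local Lipschitz estimate \eqref{n-Lipschitz} gives $\det^{\!\!2}\nabla y(x)\le K|x-x_0|^\alpha$, whence $\int_\O(\det\nabla y)^{-s}\,\md x=+\infty$ as soon as $\alpha s>6$, i.e.\ $s>6q/(q-3)$, contradicting $y\in\mathcal{A}$. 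The same blow-up estimate, applied along a putative sequence of minimizers with $\det^{\!\!2}\nabla y_k(x_k)\to0$ and using the uniform H\"older bound coming from \eqref{growth-graddet3}, rules out the case of infinitely many minimizers and produces a uniform $\varepsilon>0$ with $\det\nabla\tilde y\ge\varepsilon$ on $\bar\O$.
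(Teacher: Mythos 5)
Your proposal is correct and follows essentially the same route as the paper: the paper's own (sketched) proof also reduces everything to the argument of Proposition~\ref{prop-grad-poly}, notes that $y_k$, $\nabla y_k$ and $(\nabla y_k)^{-1}$ converge in measure, and then applies the Fonseca--Leoni lower semicontinuity result \cite[Cor.~7.9]{fonseca-leoni} with $u:=(y,\nabla y,(\nabla y)^{-1})$, $v:=\nabla[\cof\nabla y]$ and the shifted integrand $f:=\mathbb{W}+g$ — which is exactly the normal-integrand, convex-in-$v$ statement you invoke. Part (ii) indeed carries over verbatim, as you say.
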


\begin{proof}[Sketch of proof]
The proof follows the lines of the proof of Proposition~\ref{prop-grad-poly}. We can assume that a minimizing sequence $\{y_k\}_{k\in\N}\subset\mathcal{A}$ converges in measure to $y\in\mathcal{A}$ due to the compact embedding of $W^{1,p}(\O;\R^3)$ into  $L^p(\O;\R^3)$. Moreover,    $\{\nabla y_k\}_{k\in\N}$ and $\{(\nabla y_k)^{-1}\}_{k\in\N}$ converge in measure, too; cf.~\eqref{inv-conv}. Then we again apply \cite[Cor.~7.9]{fonseca-leoni} with $v:=\nabla [\cof\nabla y]$, $u:=(y,\nabla y,(\nabla y)^{-1})$, $f:=\mathbb{W}+g$,  $E:=\O$, and correspondingly for the sequences.
\end{proof}
\EOR

\WE 

Let us now turn our attention to gradient polyconvex energies with $\hat{W}$ depending also on the gradient of the Jacobian in a convex way as in \eqref{full-I}. This setting allows for a stronger result with respect to the determinant constraint: Here, we only require the Sobolev index $r$ related to the determinant to be larger than in the existence result but leave the Sobolev index $q$ giving the regularity of the cofactor matrix the same, compare Propositions~\ref{prop-grad-poly} and \ref{prop-grad-poly2}.

\begin{proposition}\label{prop-grad-poly2}
Let $\O\subset\R^3$ be a bounded Lipschitz domain, 
and let  $\Gamma=\Gamma_0\cup\Gamma_1$ be a measurable partition of $\Gamma=\partial\O$ with $\mathcal{H}(\Gamma_0)>0$. Let further $\ell:W^{1,p}(\O;\R^3)\to\R$ be a linear bounded functional and $J$ as in \eqref{newfunctional} with $I$ being gradient polyconvex and such that \eqref{growth-graddet2} holds true. Let $L:\R^{3\times 3}\to\R$ be lower semicontinuous.  Finally, let  $p\ge 2$, $q\ge\frac{p}{p-1}$, $r>1$,  $s>0$ and assume that for some given map $y_0\in W^{1,p}(\O;\R^3)$ the following set 
 \begin{align*}
\mathcal{A}:&=\{y\in W^{1,p}(\O;\R^3):\ \cof \nabla y\in W^{1,q}(\O;\R^{3\times 3}),\ \det \nabla y\in W^{1,r}(\O),\nonumber\\
& \qquad (\det\nabla y)^{-s}\in L^1(\O),\  \det\nabla y>0\mbox{ a.e.\ in $\Omega$},\, L(\nabla y)\le 0\mbox{ a.e.\ in $\Omega$},\ y=y_0\ \mbox{ on }  \Gamma_0\}
\end{align*}
is nonempty and that $\inf_{\mathcal{A}} J<+\infty$. Then the following holds:

\noindent (i) The functional  $J$  has a minimizer on $\mathcal{A}$.

\noindent (ii) Moreover, if $r>3$ and $s>3r/(r-3)$ then there is $\varepsilon>0$ such that for every minimizer  $\tilde y\in \mathcal{A}$  of $J$ it holds that $\det\nabla \tilde y\ge\varepsilon$ in $\bar\O$.
\end{proposition}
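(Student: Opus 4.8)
The plan is to run the direct method exactly as in the proof of Proposition~\ref{prop-grad-poly}; the only structural novelty is that the strengthened growth hypothesis \eqref{growth-graddet2} now controls the term $|\Delta_2|^r=|\nabla[\det\nabla y]|^r$ as well, so that a minimizing sequence is bounded for the determinant in $W^{1,r}(\O)$ rather than merely in $L^r(\O)$. This single extra compactness is what lets us keep $q$ as small as in Proposition~\ref{prop-grad-poly} while shifting the role of the critical Sobolev exponent from $q$ to $r$ in part~(ii).

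For part~(i) I would take a minimizing sequence $\{y_k\}\subset\mathcal{A}$. Coercivity through \eqref{growth-graddet2}, together with the Dirichlet datum on $\Gamma_0$ and the Poincar\'e inequality, bounds $\{y_k\}$ in $W^{1,p}(\O;\R^3)$, $\{\cof\nabla y_k\}$ in $W^{1,q}(\O;\R^{3\times3})$, $\{\det\nabla y_k\}$ in $W^{1,r}(\O)$, and $\{(\det\nabla y_k)^{-s}\}$ in $L^1(\O)$. Passing to a subsequence, $y_k\wto y$ in $W^{1,p}$, and the weak continuity of minors identifies the weak limits of $\cof\nabla y_k$ and $\det\nabla y_k$ with $\cof\nabla y$ and $\det\nabla y$; the compact embeddings $W^{1,q}(\O)\hookrightarrow\hookrightarrow L^q(\O)$ and $W^{1,r}(\O)\hookrightarrow\hookrightarrow L^r(\O)$ then upgrade the weak $W^{1,q}$- and $W^{1,r}$-convergences to strong convergence in $L^q$ and $L^r$, hence (along a further subsequence) to the pointwise a.e.\ convergences $\cof\nabla y_k\to\cof\nabla y$ and $\det\nabla y_k\to\det\nabla y$. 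As in Proposition~\ref{prop-grad-poly}, Fatou's lemma gives $\det\nabla y>0$ a.e.\ and $(\det\nabla y)^{-s}\in L^1(\O)$, and the trace theorem gives $y=y_0$ on $\Gamma_0$, so that $y\in\mathcal{A}$.

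The step I expect to be the crux is the same as in Proposition~\ref{prop-grad-poly}: recovering the pointwise convergence of the \emph{full} gradient $\nabla y_k$, which is needed both to verify the locking constraint $L(\nabla y)\le0$ and to handle the non-convex dependence of $\hat W$ on its first argument. Since no bound on the second gradient is available, there is no $W^{1,p}$-compactness; instead I would use Cramer's rule in the form $\nabla y_k=(\det\nabla y_k)\,(\cof\nabla y_k)^{-\top}$, cf.\ \eqref{cofac}. Because $\det\nabla y_k\to\det\nabla y$ and $\cof\nabla y_k\to\cof\nabla y$ a.e.\ with $\det\nabla y>0$ a.e., matrix inversion is continuous at almost every point, so $\nabla y_k\to\nabla y$ a.e., hence in measure on the finite-measure set $\O$. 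Lower semicontinuity of $L$ then yields $L(\nabla y)\le0$ a.e. Finally, setting $u:=\nabla y$ (the variable in which $\hat W$ is only continuous) and $v:=(\nabla[\cof\nabla y],\nabla[\det\nabla y])$ (the convex variables, for which $u_k\to u$ in measure and $v_k\wto v$ in $L^q\times L^r$), I would invoke \cite[Corollary~7.9]{fonseca-leoni} to obtain $I(y)\le\liminf_{k\to+\infty}I(y_k)$; linearity of $\ell$ handles the loading term, so $y$ minimizes $J$ and (i) follows.

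For part~(ii) the additional regularity of the determinant makes the argument more direct than in Proposition~\ref{prop-grad-poly}: for $r>3$ the Sobolev embedding gives $\det\nabla y\in C^{0,\beta}(\bar\O)$ with $\beta=(r-3)/r$, so one works with $\det\nabla y$ itself instead of passing through $\det\cof\nabla y=(\det\nabla y)^2$. Arguing by contradiction, if $\det\nabla y(x_0)=0$ at some $x_0\in\bar\O$, H\"older continuity gives $0\le\det\nabla y(x)\le K|x-x_0|^\beta\le Kt^\beta$ on $B(x_0,t)$, whence $(\det\nabla y)^{-s}\ge K^{-s}t^{-\beta s}$ there; integrating over $B(x_0,t)\cap\O$, whose volume is $\gtrsim t^3$ by the cone property of the Lipschitz domain, produces a lower bound of order $t^{3-\beta s}$, which diverges as $t\to0$ precisely when $\beta s>3$, i.e.\ when $s>3r/(r-3)$. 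This contradicts $(\det\nabla y)^{-s}\in L^1(\O)$, so $\det\nabla y>0$ on the compact set $\bar\O$ and hence $\det\nabla y\ge\varepsilon$ for some $\varepsilon>0$. The uniformity of $\varepsilon$ over possibly infinitely many minimizers follows exactly as in Proposition~\ref{prop-grad-poly}: the $W^{1,r}$-bound, and therefore the H\"older constant of $\det\nabla y_k$, is uniform along any sequence of minimizers, so the identical ball estimate centred at points $x_k$ where $\det\nabla y_k$ is nearly zero would force $\inf_{\mathcal{A}}J=+\infty$, a contradiction.
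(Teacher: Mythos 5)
Your proposal is correct and follows essentially the same route as the paper: the paper's (sketched) proof also runs the direct method of Proposition~\ref{prop-grad-poly} verbatim, using the additional $W^{1,r}$-bound on $\det\nabla y_k$ from \eqref{growth-graddet2} to identify the weak $W^{1,r}$-limit with $\det\nabla y$ via weak continuity of minors, and in part~(ii) exploits the H\"older continuity of $\det\nabla y$ itself (exponent $(r-3)/r$) rather than of $\det\cof\nabla y$, yielding the threshold $s>3r/(r-3)$ exactly as you derive it. Your treatment of the uniformity of $\varepsilon$ over infinitely many minimizers likewise mirrors the argument given for Proposition~\ref{prop-grad-poly}(ii).
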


\begin{proof}[Sketch of proof]
We proceed exactly as in the proof of Proposition~\ref{prop-grad-poly}. If $\{y_k\}_{k\in\N}\subset\mathcal{A}$ is a minimizing sequence for $J$ we get by 
standard arguments \cite{ball77,ciarlet} that $\det\nabla y_k\wto\det\nabla y$ in $L^r(\O)$ for $k\to+\infty$. At the same time, we can assume that  $\det\nabla y_k\wto\delta$ in $W^{1,r}(\O)$ for some $\delta\in W^{1,r}(\O)$. These two facts imply that $\delta=\det\nabla y$.  This shows (i). 
In order to prove (ii) we again argue by a contradiction.  Assume that there is $x_0\in\bar{\O}$ such that $\det\nabla y(x_0)=0$.  Let again $x_0:=0$, so that $\det\nabla  y(0)=0$. Then we get from the H\"{o}lder estimate for almost every $x\in\O$ 
\begin{align}
0\le\det\nabla y(x)=|\det\nabla y(x)-\det\nabla y(0)|\le K|x|^\alpha\ ,
\end{align}
where $\alpha=(r-3)/r$. Similar reasoning as in \eqref{HK-inequality} implies the result.

\end{proof}

\EOR

\WE
\begin{remark}[Global invertibility of deformations] 
A global  (a.e.)\ invertibility condition, the so-called Ciarlet-Ne\v{c}as condition  \cite{ciarlet-necas2}, 
\begin{align}\label{c-n}
\int_\O\det\nabla y(x)\,\md x\le\mathcal{L}^3(y(\O))\ 
\end{align}
can be easily imposed on the minimizer if $p>3$  and the proof proceeds exactly in the same way as in \cite{ciarlet-necas2}. This means that there is $\omega\subset\O$, $\mathcal{L}^3(\omega)=0$, such that  $y:\O\setminus\omega\to y(\Omega\setminus\omega)$ is injective.  As $y$ satisfies Lusin's N-condition, we also have that $\mathcal{L}^3(y(\Omega\setminus\omega))=\mathcal{L}^3(y(\Omega))$.  If $|\nabla y|^3/\det\nabla y\in L^{2+\delta}(\O)$
for some $\delta>0$ and \eqref{c-n} holds  then we even get invertibility everywhere in $\O$ due to \cite[Theorem~3.4]{hencl-koskela}. Namely,  this then implies that $y$ is an open map. Moreover, 
the mapping $F\mapsto |F|^3/\det F$ is even polyconvex and positive if $F\in\R^{3\times 3}$ and $\det F>0$, hence the term $\int_\O (|\nabla y(x)|^3/\det\nabla y(x))^{2+\delta}\,\md x$  can be easily added to the energy functional and it  preserves its weak lower semicontinuity, see \cite{ball77, ciarlet} for details.

\end{remark}
{\bf Open problem.}
Is it possible to construct $y$ such that $y\in \mathcal{A}$ and $y \not\in W^{2,1}(\O;\R^3)$ if  $r$ and $s$ are as   in  (ii) of  Proposition~\ref{prop-grad-poly2} and $\det\nabla y>0$ in $\bar\O$ ?

A partial negative answer can be obtained in the following case: consider  $\O\subset\R^3$  a bounded Lipschitz domain, $F:\O\to\R^{3\times 3}$  such that $\cof F\in W^{1,q}(\O;\R^{3\times 3})$ for $3>q\ge 3/2$, $\det F\ge\varepsilon>0$ in $\O$ for some $\varepsilon>0$, 
and $\det F\in W^{1,r}(\O)$ for some  $r>3$. Then  $F\in W^{1,3q/(6-q)}(\O;\R^{3\times 3})$. Moreover, if  $3<q$ then $F\in W^{1,q}(\O;\R^{3\times 3})$.
Indeed, as $\det F\ge\varepsilon$ we get that $1/\det F=h(\det F)$, where $h:\R \to (0,\infty)$ with
$$h(a):=\begin{cases}
1/|a| & \text{ if $|a|\ge\varepsilon$,}\\
1/\varepsilon & \text{ otherwise.} 
\end{cases}
$$
Since $h$ is a Lipschitz function, we get by  \cite[Thm.~1]{marcus-mizel} that 
$(\det F)^{-1}\in W^{1,r}(\O)$.  In view of \eqref{cofac}, \cite[Thm.~1]{valent} and $\cof F\in W^{1,q}(\O;\R^{3\times 3})$ we get that $F^{-1}\in W^{1,q}(\O;\R^{3\times 3})$. 
Applying \eqref{cofac} to $F^{-1}$  we get that 
$F=\det F(\cof F^{-1})^\top$ whose regularity again follows from \cite[Thm.~1]{marcus-mizel} \cite[Thm.~1]{valent}.
It remains to apply the previous reasoning   to $F:=\nabla y$ to show that minimizers obtained in (ii) of  Propositions~\ref{prop-grad-poly3} and \ref{prop-grad-poly2} satisfy $y\in W^{2,\min(p^*,3q/(6-q))}(\O;\R^3)$ if $3/2\le q<3$ or $y\in W^{2,\min(p^*,q)}(\O;\R^3)$ if $q>3$. In both cases: 
$$p^*:=\begin{cases}
3p/(3-p) & \text{ if $p<3$,}\\
\text{arbitrary number $\ge 1$}  & \text{ if $p=3$,}\\
+\infty & \text{ if $p>3$.}\\
\end{cases}
$$
\EOR

\EOR

\EOR

In the next example we provide a modification of the well-known Saint Venant-Kirchhoff material that is gradient polyconvex. 

\begin{example} 
Let $\varphi:\R^{3\times 3}\to\R$ be a stored energy density of an anisotropic Saint Venant-Kirchhoff material, i.e.,  
$$0\le \varphi(F):=\frac{1}{8}\mathcal{C}(F^\top F-\rm Id):(F^\top F-\rm Id) , $$
where $\mathcal{C}$ is the fourth-order and positive definite tensor of elastic constants,   and ``:'' denotes the scalar product between matrices.  Therefore, $\varphi(F)\ge c(|F|^4-1)$ for some $c>0$ and all matrices $F$ and $\varphi(F)=0$ if and only if $F=Q$ for some (not necessarily proper)  rotation $Q$. \FIRST
\MAY We define 
\begin{align}\label{stvk}
\hat W(F,\Delta_1):=\begin{cases}
\varphi(F)+\alpha(|\Delta_1|^q+(\det F)^{-s}) &\text{ if $\det F>0$,}\\ 
+\infty & \text{ otherwise}
\end{cases}
\end{align}
for $F\in\R^{3\times 3}$ and $\Delta_1\in\R^{3\times 3\times 3}$, and for some $\alpha>0$, $s>0$, and  $q=2$. \EOR Then $\hat W$ is admissible in Definition~\ref{def-gpc}, and Proposition~\ref{prop-grad-poly} can be readily  applied with $p=4$. We emphasize that  $\varphi$ is widely used in engineering/computational community because it allows for an easy implementation of all elastic constants. On the other hand, $\varphi$ is not quasiconvex which means that existence of minimizers cannot be guaranteed. Additionally,   $\varphi$ stays locally bounded even  in the vicinity of non-invertible matrices and thus allows for non-realistic material behavior. The function  $\hat W$ cures the mentioned drawbacks and 
\eqref{stvk} offers a mechanically relevant  alternative to the Saint Venant-Kirchhoff model. Indeed, $\hat W$ does not admit any change of the orientation and \MAY $\hat W(F,\Delta_1) \to +\infty$ if $\det F\to 0_+$. \EOR   Obviously, other gradient-polyconvex  variants of \eqref{stvk} are possible, too. 
\EOR
\end{example}


\bigskip

Let us remark that Propositions~\ref{prop-grad-poly}, \ref{prop-grad-poly3} and \ref{prop-grad-poly2} can be further used  to get existence results for  static  as well as for rate-independent evolutionary problems describing   shape memory materials,   elasto-plasticity with non-quasiconvex energy densities, for instance, or for their mutual interactions as in  \cite{GS} or \cite{kruzik-zimmer}, respectively. The main idea of the proofs remains unchanged, however. 
We also refer to \cite{mielke-roubicek1} for a thorough review of mathematical results on rate-independent processes with many applications to materials science.

\bigskip

{\bf Acknowledgment:}   This work was partly conducted when MK held the Giovanni-Prodi Chair in the Institute of Mathematics, University of W\"{u}rzburg. The invitation, hospitality, and support  are  gratefully acknowledged. We thank Jan Valdman for providing us with Figure~\ref{figure1}, Nicola Fusco for mentioning his interest in the general functional \eqref{nf},  Stefan Kr\"{o}mer for interesting discussions, \WE and two anonymous referees for very valuable comments. \EOR  This research  was also supported by the GA\v{C}R grant  17-04301S, by the DAAD--AV\v{C}R project DAAD 16-14 and PPP 57212737 with funds from BMBF.





\end{document}